\documentclass[10pt]{article}
\usepackage{amsfonts}
\usepackage{latexsym}
\usepackage{cite}
\usepackage{amsmath,amsfonts,latexsym,amssymb}
\usepackage[mathscr]{eucal}
\usepackage{cases}
\usepackage{amsthm}

\usepackage[bf,small]{caption2}
\usepackage{float,color}
\usepackage{graphicx}
\usepackage{amsmath}
\usepackage{amssymb}
\usepackage[all]{xy}

\newtheorem{theorem}{theorem}[section]
\newtheorem{thm}[theorem]{Theorem}
\newtheorem{lem}[theorem]{Lemma}

\newtheorem{prop}[theorem]{Proposition}

\newtheorem{ass}[theorem]{Assertion}
\newtheorem{nota}[theorem]{Notation}

\newtheorem{exmp}[theorem]{Example}

\newtheorem{rmk}[theorem]{Remark}

\begin{document}

\title{\textbf{Character varieties of odd classical pretzel knots}}
\author{\Large Haimiao Chen
\footnote{Email: {\em\small chenhm@math.pku.edu.cn}} \\
\normalsize \em{Beijing Technology and Business University, Beijing, China}}

\date{}
\maketitle

\begin{abstract}
  We determine the ${\rm SL}(2,\mathbb{C})$-character variety for each odd classical pretzel knot $P(2k_1+1,2k_2+1,2k_3+1)$, and present a method for computing its A-polynomial.

  \medskip
  \noindent {\bf Keywords:}  ${\rm SL}(2,\mathbb{C})$-representation, character variety, odd classical pretzel knot, A-polynomial  \\
  {\bf MSC2010:} 57M25, 57M27
\end{abstract}

\section{Introduction}

For a link $K\subset S^{3}$, let $E_{K}=S^3-N(K)$, with $N(K)$ a tubular neighborhood.
The ${\rm SL}(2,\mathbb{C})$-{\it representation variety} of $K$ is the set $\mathcal{R}(K)$ of representations $\rho:\pi_1(E_{K})\to{\rm SL}(2,\mathbb{C})$, and the {\it character variety} of $K$ is
$$\mathcal{X}(K)=\{\chi(\rho)\colon\rho\in\mathcal{R}(K)\},$$
where the {\it character} $\chi(\rho):\pi_1(E_{K})\to\mathbb{C}$ sends each $x\in\pi_1(E_{K})$ to ${\rm tr}(\rho(x))$.

It is well-known that (see \cite{CS83}) both $\mathcal{R}(K)$ and $\mathcal{X}(K)$ can be defined by a finite set of polynomial equations, and this is why they are so named.
Actually, $\mathcal{X}(K)$ is the geometric-invariant-theoretic quotient of $\mathcal{R}(K)$ under the action of ${\rm SL}(2,\mathbb{C})$ via conjugation. To be more plain, denoting the subset of $\mathcal{R}(K)$ consisting of irreducible representations by $\mathcal{R}^{\rm irr}(K)$, it is also well-known that up to conjugacy, each $\rho\in\mathcal{R}^{\rm irr}(K)$ is determined by $\chi(\rho)$.
Call
$$\mathcal{X}^{\rm irr}(K)=\{\chi(\rho)\colon\rho\in\mathcal{R}^{\rm irr}(K)\}$$
the {\it irreducible character variety} of $K$. Our main concern is to study $\mathcal{X}^{\rm irr}(K)$; the characters of reducible representations are relatively easy to understand.

Since the seminal paper \cite{CS83}, much attention has been attracted to representations of 3-manifold groups into ${\rm SL}(2,\mathbb{C})$. One reason is that, representation/character variety encodes much topological and geometric information on the underlying 3-manifolds.

Among the invariants extracted from character variety, the {\it A-polynomial} (the definition is recalled in Section 4) proposed in \cite{CCGLS94} is rather important, in that, it plays a key role not only in the problem of exceptional fillings, but also in the celebrated {\it AJ Conjecture} which involves the behavior of quantum invariants.

Till now, character varieties have been found for the following links: torus knots \cite{Mu09}, double twist knots \cite{MPL11}, double twist links \cite{PT14}, $(-2,2m+1,2n)$-pretzel links \cite{Tr16}.
In this paper, we aim to determine $\mathcal{X}^{\rm irr}(K)$ when $K$ is an odd classical pretzel knot.
This is the first time to deal with a 3-parameter family of knots. We are faced with the complications caused by three non-commuting variables. However, we can still find a route to bypass the difficulty, reducing equalities between matrices to those of traces.

As the result shows, $\mathcal{X}^{\rm irr}(K)$ consists of a finite set of isolated points, several conics and an algebraic curve of high genus. Based on this, we present a method for explicitly computing the A-polynomial.

\medskip

{\bf Acknowledgement}

The author is supported by NSFC (Grant No. 11401014). He is grateful to Prof. Xun Yu at Tianjin University for beneficial conversations.

\section{Preliminary}

Let $\mathcal{M}(2,\mathbb{C})$ denote the set of $2\times 2$ matrices with entries in $\mathbb{C}$; it is a 4-dimensional vector space over $\mathbb{C}$. Let $I$ denote the $2\times 2$ identity matrix.

Given $t\in \mathbb{C}$ and $k\in\mathbb{Z}$, take $a$ with $a+a^{-1}=t$ and put
\begin{align}
\omega_{k}(t)=\begin{cases}
(a^{k}-a^{-k})/(a-a^{-1}), &a\notin\{\pm 1\}, \\
ka^{k-1}, &a\in\{\pm 1\};
\end{cases}
\end{align}
note that the right-hand-side is unchanged when $a$ is replaced by $a^{-1}$. It is easy to verify that for all $k\in\mathbb{Z}$,
\begin{align}
\omega_k(t)+\omega_{-k}(t)&=0, \\
\omega_{k+1}(t)-t\omega_{k}(t)+\omega_{k-1}(t)&=0, \\
\omega_{k}(t)^{2}-t\omega_{k}(t)\omega_{k-1}(t)+\omega_{k-1}(t)^{2}&=1.  \label{eq:omega}
\end{align}

If $X\in{\rm SL}(2,\mathbb{C})$ with ${\rm tr}(X)=t$, then by Cayley-Hamilton Theorem,
\begin{align}
X^{-1}=tI-X, \label{eq:X-inverse}
\end{align}
and inductively we can obtain that for all $k\in\mathbb{Z}$,
\begin{align}
X^{k}=\omega_{k}(t)X-\omega_{k-1}(t)I. \label{eq:X^k}
\end{align}

\begin{lem}  \label{lem:basic}
For any $X,Y\in{\rm SL}(2,\mathbb{C})$ with ${\rm tr}(X)=t_{1}$, ${\rm tr}(Y)=t_{2}$ and ${\rm tr}(XY)=t_{12}$, one has
\begin{align}
XYX&=t_{12}X-Y^{-1}, \label{eq:basic-1}  \\
XY+YX&=(t_{12}-t_{1}t_{2})I+t_{2}X+t_{1}Y.   \label{eq:basic-2}
\end{align}
\end{lem}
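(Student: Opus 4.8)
The plan is to derive both identities as short formal consequences of the Cayley--Hamilton theorem in the shape already recorded as \eqref{eq:X-inverse} and \eqref{eq:X^k}, never passing to matrix entries.

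For \eqref{eq:basic-1}, I would first note that $XY\in{\rm SL}(2,\mathbb{C})$, since $\det(XY)=\det X\cdot\det Y=1$, and that ${\rm tr}(XY)=t_{12}$ by hypothesis. Applying \eqref{eq:X^k} to $XY$ with $k=2$ (equivalently, Cayley--Hamilton for $XY$), and using $\omega_1=1$ and $\omega_2(t_{12})=t_{12}$, gives $(XY)^2=t_{12}\,XY-I$, that is, $XYXY=t_{12}XY-I$. Right-multiplying by $Y^{-1}$ yields $XYX=t_{12}X-Y^{-1}$, which is \eqref{eq:basic-1}.

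For \eqref{eq:basic-2} I would bootstrap from \eqref{eq:basic-1}. Using \eqref{eq:X-inverse} to write $Y^{-1}=t_2 I-Y$, equation \eqref{eq:basic-1} becomes $XYX=t_{12}X-t_2 I+Y$. Right-multiplying by $X^{-1}=t_1 I-X$ (again \eqref{eq:X-inverse}) gives $XY=(t_{12}X-t_2 I+Y)(t_1 I-X)$; expanding and then substituting $X^2=t_1 X-I$ (the $k=2$ case of \eqref{eq:X^k}) makes the two $t_1 t_{12}X$ terms cancel, leaving $XY=(t_{12}-t_1 t_2)I+t_2 X+t_1 Y-YX$. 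Rearranging yields \eqref{eq:basic-2}.

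I do not anticipate a genuine obstacle: the argument is a one-line computation once the right objects are fed to Cayley--Hamilton. The only points needing care are (i) bookkeeping on traces—in particular that ${\rm tr}(YX)={\rm tr}(XY)=t_{12}$, and that one must not slip in any assumption about ${\rm tr}(XY^{-1})$—and (ii) checking that every matrix to which Cayley--Hamilton is applied actually lies in ${\rm SL}(2,\mathbb{C})$, which is immediate from multiplicativity of $\det$. As an alternative for \eqref{eq:basic-2} one could instead apply Cayley--Hamilton to $X+Y$, using $\det(X+Y)={\rm tr}(X){\rm tr}(Y)-{\rm tr}(XY)+\det X+\det Y$ for $2\times 2$ matrices and then reducing $X^2,Y^2$ via \eqref{eq:X^k}; I would note this in passing but prefer the bootstrap route above, as it reuses \eqref{eq:basic-1} and requires no new determinant identity.
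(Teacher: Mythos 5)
Your proof is correct, and the first identity is proved exactly as in the paper: Cayley--Hamilton applied to $XY$ gives $XYXY=t_{12}XY-I$, then right-multiply by $Y^{-1}$. For the second identity your route differs slightly from the paper's: the paper applies $Z+Z^{-1}={\rm tr}(Z)\,I$ to $Z=XY^{-1}$ and expands $X(t_2I-Y)+Y(t_1I-X)=(t_1t_2-t_{12})I$, reaching \eqref{eq:basic-2} in one line without using \eqref{eq:basic-1}; your bootstrap from \eqref{eq:basic-1} via right-multiplication by $X^{-1}=t_1I-X$ and the reduction $X^2=t_1X-I$ is an equally valid, marginally longer computation resting on the same Cayley--Hamilton input, so the two arguments are essentially interchangeable.
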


\begin{proof}
The first identity follows from $XYXY=t_{12}XY-I$, and the second one can be deduced from
$$X(t_2I-Y)+Y(t_1I-X)=XY^{-1}+YX^{-1}={\rm tr}(XY^{-1})\cdot I=(t_1t_2-t_{12})I.$$
\end{proof}

\begin{lem} \label{lem:3 matrix}
Given $t, t_{12}, t_{23}, t_{13}, t_{123}\in\mathbb{C}$, let
\begin{align*}
\nu_0&=t^2(3-t_{13}-t_{23}-t_{13})+t_{12}^2+t_{23}^2+t_{13}^2+t_{12}t_{23}t_{13}-4, \\
\nu_1&=t(t_{12}+t_{23}+t_{13})-t^3.
\end{align*}

{\rm(i)} There exist $X_1, X_2, X_3\in{\rm SL}(2,\mathbb{C})$ with ${\rm tr}(X_1)={\rm tr}(X_2)={\rm tr}(X_3)=t$,
${\rm tr}(X_{i}X_{j})=t_{ij}$, $1\le i<j\le 3$, and ${\rm tr}(X_1X_2X_3)=t_{123}$ if and only if
\begin{align}
t_{123}^2-\nu_1t_{123}+\nu_0=0.   \label{eq:r-gamma}
\end{align}

{\rm(ii)} If {\rm(\ref{eq:r-gamma})} holds and $X_1,X_2,X_3$ are required to have no common eigenvector, then the ordered triple $(X_1,X_2,X_3)$ is unique up to simultaneous conjugacy.
\end{lem}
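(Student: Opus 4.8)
\medskip
\noindent\emph{Proof outline.}\
For the ``only if'' part of (i) the plan is a pure trace manipulation. Put $P=X_1X_2X_3$ and $Q=X_2X_1X_3$, so that $\mathrm{tr}(Q)=\mathrm{tr}(X_1X_3X_2)$ by cyclicity. Rewriting $X_1X_2+X_2X_1$ via \eqref{eq:basic-2} and using $\mathrm{tr}(X_i)=t$, one gets at once $\mathrm{tr}(P)+\mathrm{tr}(Q)=\mathrm{tr}\big((X_1X_2+X_2X_1)X_3\big)=\nu_1$. For the product, apply the identity $\mathrm{tr}(A)\mathrm{tr}(B)=\mathrm{tr}(AB)+\mathrm{tr}(AB^{-1})$ to $A=P$, $B=Q$: then $PQ^{-1}=X_1X_2X_1^{-1}X_2^{-1}$ has the well known commutator trace $2t^2+t_{12}^2-t^2t_{12}-2$, whereas $\mathrm{tr}(PQ)=\mathrm{tr}(X_1X_2X_3X_2X_1X_3)$ is brought down by repeated use of \eqref{eq:basic-1} and \eqref{eq:X-inverse}, successively reducing the palindromic blocks $X_2X_3X_2$, $X_1X_2X_1$ and $X_1X_3^{-1}X_1$, to an explicit polynomial in $t,t_{12},t_{13},t_{23}$; adding the two pieces yields $\mathrm{tr}(P)\mathrm{tr}(Q)=\nu_0$. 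Hence $\mathrm{tr}(P)=t_{123}$ and $\mathrm{tr}(X_1X_3X_2)$ are the two roots of $z^2-\nu_1z+\nu_0$, so \eqref{eq:r-gamma} holds.

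For the ``if'' part of (i) together with (ii), the main case is when a pair $X_1,X_2$ with $\mathrm{tr}(X_1)=\mathrm{tr}(X_2)=t$ and $\mathrm{tr}(X_1X_2)=t_{12}$ can be chosen irreducible, i.e.\ $2t^2+t_{12}^2-t^2t_{12}-4\ne0$, equivalently $t_{12}\notin\{2,\,t^2-2\}$; such a pair exists and is unique up to conjugacy. Then $\{I,X_1,X_2,X_1X_2\}$ is a $\mathbb{C}$-basis of $\mathcal{M}(2,\mathbb{C})$, and since the pairing $(A,B)\mapsto\mathrm{tr}(AB)$ is nondegenerate there is exactly one $X_3=\alpha I+\beta X_1+\gamma X_2+\delta X_1X_2\in\mathcal{M}(2,\mathbb{C})$ with $\mathrm{tr}(X_3)=t$, $\mathrm{tr}(X_1X_3)=t_{13}$, $\mathrm{tr}(X_2X_3)=t_{23}$ and $\mathrm{tr}(X_1X_2X_3)=t_{123}$, the coefficients $(\alpha,\beta,\gamma,\delta)$ being read off by inverting the $4\times4$ Gram matrix. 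Since $\mathrm{tr}(X_3)=t$ is already imposed, $X_3$ lies in $\mathrm{SL}(2,\mathbb{C})$ exactly when $\det X_3=1$; expanding $\det X_3$ in this basis, substituting the linear expressions for $\alpha,\beta,\gamma,\delta$, and simplifying, one finds that $\det X_3-1$ is a nonzero scalar multiple of $t_{123}^2-\nu_1t_{123}+\nu_0$. Thus a solution $X_3$ exists iff \eqref{eq:r-gamma} holds, and, $X_3$ being uniquely pinned down by the traces and $(X_1,X_2)$ up to conjugacy, the triple with no common eigenvector is unique up to simultaneous conjugacy.

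It remains to treat the exceptional values $t_{12}=2$ and $t_{12}=t^2-2$, where every admissible pair $X_1,X_2$ is reducible; here one argues by hand, taking $X_1,X_2$ upper triangular with free off-diagonal entries and $X_3$ a general trace-$t$ matrix, and checking directly that the remaining three trace conditions are solvable precisely when \eqref{eq:r-gamma} holds and that the resulting triple, when it has no common eigenvector, is rigid up to conjugacy. Alternatively, part (ii) can be read off the classical description of the $\mathrm{SL}(2,\mathbb{C})$-character variety of the free group $F_3$: the seven functions $\mathrm{tr}(X_i)$, $\mathrm{tr}(X_iX_j)$, $\mathrm{tr}(X_1X_2X_3)$ generate its coordinate ring, and an irreducible representation --- a triple with no common eigenvector --- is determined by its character (see \cite{CS83}). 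The one genuinely laborious step is the verification that $\det X_3-1$ is proportional to the quadratic in the second paragraph; everything else is bookkeeping or standard.
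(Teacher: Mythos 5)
The paper does not actually prove this lemma: part (i) is quoted from the statement following Lemma 2.3 in \cite{Lu99} and part (ii) from Lemma 2.4 of \cite{Lu99} (going back to Lemma 1.5.2 of \cite{CS83}), so any self-contained argument is necessarily ``different.'' Your outline is the standard Fricke--Vogt proof and its skeleton is sound: the Vieta computation for the ``only if'' direction is correct (with $X=X_1$, $Y=X_2$ and $t_1=t_2=t$, \eqref{eq:basic-2} gives $\mathrm{tr}(P)+\mathrm{tr}(Q)=t(t_{12}+t_{13}+t_{23})-t^3=\nu_1$, and $\mathrm{tr}(P)\mathrm{tr}(Q)=\mathrm{tr}(PQ)+\mathrm{tr}(PQ^{-1})$ with $PQ^{-1}=X_1X_2X_1^{-1}X_2^{-1}$ does evaluate to $\nu_0$ --- incidentally your computation exposes a typo in the paper's $\nu_0$, where $t_{13}$ appears twice in place of $t_{12}$ inside $t^2(3-\cdots)$). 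The factorization $2t^2+t_{12}^2-t^2t_{12}-4=(t_{12}-2)(t_{12}-t^2+2)$ and the basis argument via the nondegenerate trace pairing are also correct. Two caveats keep this from being a complete proof. First, the two computations that carry all the content --- the evaluation of $\mathrm{tr}(X_1X_2X_3X_2X_1X_3)$ and, above all, the identity $\det X_3-1=c\,(t_{123}^2-\nu_1t_{123}+\nu_0)$ with $c\ne0$ --- are asserted rather than performed; the second one is exactly where \eqref{eq:r-gamma} comes from, so it cannot be waved away as bookkeeping. Second, in the degenerate case you cannot always escape by permuting indices: all three $t_{ij}$ may lie in $\{2,t^2-2\}$ while the triple is still irreducible, so the ``by hand'' case analysis is genuinely needed for both existence and rigidity there; your fallback of citing \cite{CS83} for (ii) is legitimate (it is what the paper does), but then the existence half of (i) in the degenerate case still has to be checked separately. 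What your route buys, compared with the paper's bare citation, is an explicit and referee-checkable derivation of the quadratic \eqref{eq:r-gamma}; what it costs is the determinant expansion, which is several lines of honest algebra.
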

Here (i) is a special case of the statement next to Lemma 2.3 in \cite{Lu99}, 
and (ii) follows from Lemma 2.4 in \cite{Lu99} (referred to Lemma 1.5.2 in \cite{CS83}), restating that up to conjugacy, an irreducible representation of the free group of rank 3 is determined by the tuple $(t,t_{12},t_{23},t_{13},t_{123})$ satisfying (\ref{eq:r-gamma}). Also see \cite{Ho72}.

\medskip

For $a,b\in\mathbb{C}$ with $a\ne 0$, put
\begin{align}
U(a,b)=\left(\begin{array}{cc} a & b \\ 0 & a^{-1} \end{array}\right), \qquad
V(a,b)=\left(\begin{array}{cc} a & 0 \\ b & a^{-1} \end{array}\right).
\end{align}

\section{The Character variety}

\subsection{Set up}

\begin{figure} [h]
  \centering
  \includegraphics[width=0.4\textwidth]{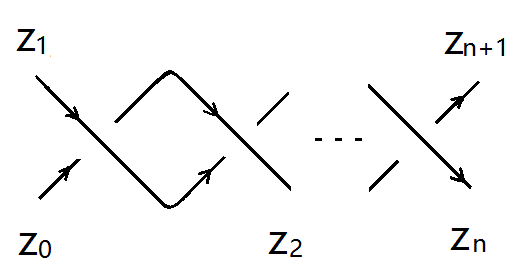}\\
  \caption{The tangle as a part of a link} \label{fig:n-tangle}
\end{figure}

For the Wirtinger presentation of a link $L$, refer to Theorem 3.4 in  \cite{BZ03}. Suppose a part of $L$ is an integral tangle as shown in Figure \ref{fig:n-tangle}. Let $z_0,z_1,\ldots,z_n$ denote the elements of $\pi_1(E_L)$ corresponding to the labeled directed arcs.
Since
$z_{k+1}z_{k}=z_{k}z_{k-1}=\cdots =z_{1}z_{0}$,
we have
\begin{align*}
z_{k+2}=z_{k+1}z_{k}z_{k+1}^{-1}=(z_{k+1}z_{k})z_{k}(z_{k+1}z_{k})^{-1}=(z_{1}z_{0})z_{k}(z_{1}z_{0})^{-1}.
\end{align*}
Hence for all $h\in\mathbb{Z}$,
\begin{align}
z_{2h}=(z_{1}z_{0})^{h}z_{0}(z_{1}z_{0})^{-h}, \qquad
z_{2h+1}=(z_{1}z_{0})^{h}z_{1}(z_{1}z_{0})^{-h}.  \label{eq:zn}
\end{align}

In this paper we consider the odd classical pretzel knot
\begin{align}
K=P(2k_1+1,2k_2+1,2k_3+1), \qquad k_1,k_2,k_3\in\mathbb{Z}.
\end{align}
A diagram for $K$ is shown in Figure \ref{fig:P3} when $k_1=1, k_2=k_2=2$.

\begin{figure}
  \centering
  \includegraphics[width=0.45\textwidth]{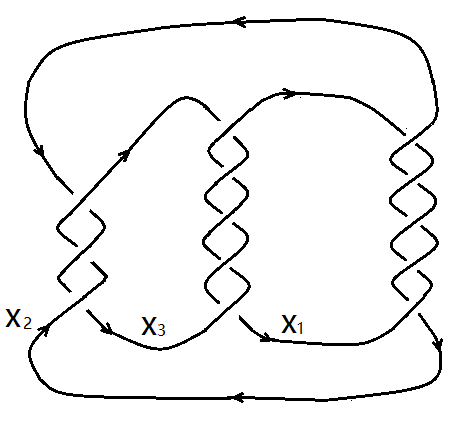}\\
  \caption{The pretzel knot $P(3,5,5)$} \label{fig:P3}
\end{figure}

Let $\rho:\pi_1(E_K)\to{\rm SL}(2,\mathbb{C})$ be a representation, and let $X_j=\rho(x_j), j=1,2,3$.
The following can be deduced using (\ref{eq:zn}):
\begin{align}
(X_{3}X_{1}^{-1})^{k_{2}+1}X_{1}(X_{3}X_{1}^{-1})^{-k_{2}-1}
&=(X_{2}X_{3}^{-1})^{k_1}X_{2}(X_{2}X_{3}^{-1})^{-k_1}, \label{eq:relation1} \\
(X_{1}X_{2}^{-1})^{k_{3}+1}X_{2}(X_{1}X_{2}^{-1})^{-k_{3}-1}
&=(X_{3}X_{1}^{-1})^{k_{2}}X_{3}(X_{3}X_{1}^{-1})^{-k_{2}}, \label{eq:relation2} \\
(X_{2}X_{3}^{-1})^{k_{1}+1}X_{3}(X_{2}X_{3}^{-1})^{-k_{1}-1}
&=(X_{1}X_{2}^{-1})^{k_3}X_{1}(X_{1}X_{2}^{-1})^{-k_{3}}. \label{eq:relation3}
\end{align}
Conversely, any $X_1,X_2,X_3\in{\rm SL}(2,\mathbb{C})$ satisfying these relations give rise to a representation.
\begin{rmk}
\rm
Since the $X_j$'s are conjugate to each other, none of them can be $\pm I$, unless $\rho$ is trivial. Let us assume $\rho$ to be nontrivial.
\end{rmk}

\begin{nota}
\rm To simplify the writing, by $X_{j+}$ we mean $X_{j+1}$ for $j\in\{1,2\}$ and $X_{1}$ for $j=3$; by $X_{j-}$ we mean $X_{j-1}$ for $j\in\{2,3\}$ and $X_{3}$ for $j=1$. Similarly for other situations.
\end{nota}

For $j=1,2,3$, put
\begin{align}
Y_{j}&=X_{j+}X_{j-}^{-1}, \\
A_{j}&=Y_{j}^{k_{j}}X_{j+}Y_{j}^{-k_{j}}X_{j-}.
\end{align}
The relations (\ref{eq:relation1})--(\ref{eq:relation3}) are equivalent to
\begin{align}
A_{1}=A_{2}=A_{3}.  \label{eq:A}
\end{align}

\begin{rmk} \label{rmk:reducible}
\rm
If $\rho$ is reducible, then taking conjugation if necessary, we may assume that the elements of the image of $\rho$ are all upper-triangular, and furthermore, $X_1=U(u,1)$ for some $u\ne 0$.
Since $X_1,X_2,X_3$ are conjugate to each other via upper-triangular elements, we have $X_j=U(u,a_j), j=2,3$ for some $a_j$; let $a_1=1$.
Then $Y_j=U(1,u(a_{j+}-a_{j-}))$ so that ${\rm tr}(Y_j)=2$, and
\begin{align*}
A_j&=U(1,k_ju(a_{j+}-a_{j-}))U(u,a_{j+})U(1,-k_ju(a_{j+}-a_{j-}))U(u,a_{j-}) \\
&=U(u^2,((k_j+1)u^{-1}-k_ju)a_{j+}+((k_j+1)u-k_ju^{-1})a_{j-}).
\end{align*}
The condition (\ref{eq:A}) is equivalent to
\begin{align*}
(k_1+k_3+1)(u^{-1}-u)(a_2-1)+((k_1+1)u-k_1u^{-1})(a_3-1)=0, \\
((k_1+1)u^{-1}-k_1u)(a_2-1)+(k_1+k_2+1)(u-u^{-1})(a_3-1)=0.
\end{align*}

All reducible representations can be found this way. Note that $\rho$ is abelian if and only if $a_2=a_3=1$.
\end{rmk}

\subsection{Irreducible representations}

In this subsection, $\rho$ is assumed to be irreducible; equivalently, $X_1,X_2,X_3$ have no common eigenvector.

Suppose
\begin{align}
{\rm tr}(X_{1}X_{2}X_{3})&=r; \qquad {\rm tr}(X_{1})={\rm tr}(X_{2})={\rm tr}(X_{3})=t=u+u^{-1}, \\
&{\rm tr}(Y_{j})=s_j=v_j+v_j^{-1}, \qquad  j=1,2,3.  \label{eq:trace}
\end{align}
Clearly,
\begin{align}
{\rm tr}(X_{j+}X_{j-})={\rm tr}(X_{j+}(tI-X_{j-}^{-1}))
=t^2-s_j.
\end{align}

Let
\begin{align}
\sigma_1=s_1+s_2+s_3,  \qquad \sigma_2&=s_1s_2+s_2s_3+s_3s_1,  \qquad \sigma_3=s_1s_2s_3, \\
\tau&=t^3+t-r, \\
\delta&=4+\sigma_3+2\sigma_2-\sigma_1^2, \\
\kappa&=\tau^2-t(\sigma_1+2)\tau+t^2(\sigma_2+4).
\end{align}
Then (\ref{eq:r-gamma}) can be rewritten as
\begin{align}
\kappa=\delta. \label{eq:t-sigma-mu}
\end{align}

\begin{rmk}
\rm Note that
\begin{align*}
\delta=-(s_3-v_1v_2-v_1^{-1}v_2^{-1})(s_3-v_1v_2^{-1}-v_1^{-1}v_2),
\end{align*}
so $\delta=0$ if and only if $v_3=v_1^{\epsilon_1}v_2^{\epsilon_2}$ with $\epsilon_1,\epsilon_2\in\{\pm 1\}$.
\end{rmk}

For each $j$, denote
\begin{align}
\alpha_j=\omega_{k_j-1}(s_j), \qquad \beta_{j}=\omega_{k_j}(s_{j}), \qquad \gamma_{j}=\omega_{k_{j}+1}(s_{j}).
\end{align}
Using (\ref{eq:X^k}) and (\ref{eq:basic-1}), we compute
\begin{align*}
Y_{j}^{k_{j}}X_{j+}&=(\beta_jY_j-\alpha_jI)X_{j+}=\beta_jX_{j+}X_{j-}^{-1}X_{j+}-\alpha_jX_{j+} \\
&=(s_j\beta_j-\alpha_j)X_{j+}-\beta_jX_{j-}=\gamma_{j}X_{j+}-\beta_{j}X_{j-}, \\
Y_{j}^{-k_{j}}X_{j-}&=(-\beta_jY_j+\gamma_jI)X_{j-}=\gamma_jX_{j-}-\beta_jX_{j+},
\end{align*}
so that
\begin{align}
A_{j}& =(\gamma_{j}X_{j+}-\beta_{j}X_{j-})(\gamma_{j}X_{j-}-\beta_{j}X_{j+})  \label{eq:Aj-0} \\
& =\gamma_{j}^{2}X_{j+}X_{j-}-\gamma_{j}\beta_{j}(X_{j+}^{2}+X_{j-}^{2})+\beta_{j}^{2}X_{j-}X_{j+},  \nonumber \\
& =(\gamma_j^2-\beta_j^2)X_{j+}X_{j-}+t(\beta_j-\gamma_j)\beta_j(X_{j+}+X_{j-})+(2\gamma_j-s_j\beta_j)\gamma_jI,  \label{eq:Aj-2}
\end{align}
where in the last line, (\ref{eq:basic-2}) is used.

\begin{lem} \label{lem:sj-ne2}
$s_j\ne 2$ for at least one $j$.
\end{lem}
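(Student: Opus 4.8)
The plan is to argue by contradiction: suppose $s_1=s_2=s_3=2$, which by (\ref{eq:trace}) means $v_1=v_2=v_3=\pm 1$, so each $Y_j$ is parabolic (or $\pm I$). The key observation is that $s_j=2$ forces the $\omega$-values appearing in $A_j$ to collapse to a simple linear form: indeed, $\omega_{k}(2)=k$ (taking $a=1$ in the definition), so $\alpha_j=k_j-1$, $\beta_j=k_j$, $\gamma_j=k_j+1$, and in particular $\gamma_j^2-\beta_j^2=2k_j+1$, $(\beta_j-\gamma_j)\beta_j=-k_j$, and $2\gamma_j-s_j\beta_j=2$. Substituting into (\ref{eq:Aj-2}) gives
\begin{align*}
A_j=(2k_j+1)X_{j+}X_{j-}-tk_j(X_{j+}+X_{j-})+(2k_j+1-k_jt^2+\text{lower})I,
\end{align*}
so each $A_j$ becomes an explicit affine combination of $I$, $X_{j+}$, $X_{j-}$, and $X_{j+}X_{j-}$ with coefficients depending only on $t$ and $k_j$.

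First I would use the basis structure of $\mathcal{M}(2,\mathbb{C})$: since $\rho$ is irreducible, the four matrices $I, X_1, X_2, X_1X_2$ are linearly independent (this is standard — an irreducible pair generates $\mathcal{M}(2,\mathbb{C})$ as an algebra, and these four span it), hence form a basis. Similarly $\{I, X_1, X_3, X_1X_3\}$ and $\{I, X_2, X_3, X_2X_3\}$ are bases. I would then take the equation $A_1=A_2$, express both sides in the basis $\{I, X_1, X_2, X_1X_2\}$, perhaps after first using Lemma \ref{lem:basic} to rewrite products like $X_3X_1$, $X_2X_3$ in terms of the chosen basis (using that $X_3$ itself can be written in the basis via the trace identities; this is where $r=\mathrm{tr}(X_1X_2X_3)$ enters). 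Matching coefficients yields a system of polynomial equations in $t$ and the $k_j$. Doing the same with $A_2=A_3$ (or $A_1=A_3$) gives further equations, and the claim is that this system is inconsistent.

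The cleanest route is probably to peel off just the "top" coefficient. Writing $A_j = c_jI + \text{(terms)}$, the most accessible invariant is the trace or, better, the coefficient of the product term. For instance, if I expand $A_1$ in the basis $\{I,X_1,X_2,X_1X_2\}$: here $X_{1+}=X_2$, $X_{1-}=X_3$, so $A_1=(2k_1+1)X_2X_3 - tk_1(X_2+X_3)+c_1 I$, and I need $X_2X_3$ and $X_3$ in terms of the basis. Using $X_3 = \lambda_0 I + \lambda_1 X_1 + \lambda_2 X_2 + \lambda_{12}X_1X_2$ with coefficients determined by the traces $t, t_{12}=t^2-s_3$ (wait — $\mathrm{tr}(X_1X_2)=t^2-s_3$ since $Y_3=X_1X_2^{-1}$), $t_{13}=\mathrm{tr}(X_1X_3)=t^2-s_2$, $t_{23}=\mathrm{tr}(X_2X_3)=t^2-s_1$, and $r$. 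When $s_1=s_2=s_3=2$ all these off-diagonal traces equal $t^2-2$, which is exactly $\mathrm{tr}(X_iX_j)$ for commuting-looking data, simplifying the linear algebra considerably. Comparing the coefficient of $X_1X_2$ in $A_1$ versus $A_2$ versus $A_3$ should already produce a contradiction, since on the $A_2$ side ($X_{2+}=X_3$, $X_{2-}=X_1$) the product $X_3X_1$ contributes differently.

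The main obstacle I anticipate is bookkeeping rather than conceptual: one must carefully convert all of $A_1, A_2, A_3$ into a single common basis, which requires expressing $X_3$ (and the various two-fold products) in the basis $\{I, X_1, X_2, X_1X_2\}$ using the trace identities of Lemma \ref{lem:3 matrix} and equation (\ref{eq:X-inverse}), and then tracking how the parameters $k_1,k_2,k_3$ and $t$ enter each coefficient. A subtlety to watch: I should make sure the argument does not secretly require $t\neq \pm 2$ or some $k_j\neq 0$; if $k_j=0$ then $Y_j^{k_j}=I$ and $A_j=X_{j+}X_{j-}$, a genuinely simpler but special case that may need to be checked separately (though note $P(1,1,1)$ is the trefoil and one can also dispatch small cases by hand). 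If a fully uniform computation proves unwieldy, a fallback is: under $s_1=s_2=s_3=2$, show directly that $A_1=A_2=A_3$ forces $X_1,X_2,X_3$ to share a common eigenvector (the common eigenvector of the parabolic $Y_j$'s), contradicting irreducibility — this geometric argument may be shorter than the coefficient-matching one.
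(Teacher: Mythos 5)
There is a genuine gap at the foundation of your main line of attack. You assert that irreducibility of $\rho$ makes $\{I,X_1,X_2,X_1X_2\}$ a basis of $\mathcal{M}(2,\mathbb{C})$. That is the criterion for the \emph{pair} $(X_1,X_2)$ to be irreducible, and irreducibility of the triple does not imply irreducibility of any pair. Worse, the hypothesis you are trying to refute kills it outright: with $\mathrm{tr}(X_1)=\mathrm{tr}(X_2)=t$ and $s_3=\mathrm{tr}(X_1X_2^{-1})=2$ one gets $\mathrm{tr}(X_1X_2)=t^2-2$, and the Fricke identity gives $\mathrm{tr}(X_1X_2X_1^{-1}X_2^{-1})=2t^2+(t^2-2)^2-t^2(t^2-2)-2=2$, so the pair $(X_1,X_2)$ \emph{is} reducible and $I,X_1,X_2,X_1X_2$ are linearly dependent (they all lie in a conjugate of the algebra of upper-triangular matrices). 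The same happens for every pair, so none of your three proposed bases is actually a basis and the coefficient-matching scheme cannot start. Moreover, the contradiction you are hunting for does not live in the relations $A_1=A_2=A_3$ at all, and you never exhibit the claimed inconsistency of the resulting system --- you only assert that it should appear.

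The paper's proof is much lighter and uses only the trace data plus irreducibility, with no reference to the pretzel relations: the computation above shows each pair $(X_i,X_j)$ shares an eigenvector. Concretely, after conjugating $X_1$ to $U(u,0)$ (when $u\neq\pm1$), each of $X_2,X_3$ must be upper- or lower-triangular; to avoid a common eigenvector of all three one must be $U(u,a)$ and the other $V(u,b)$ with $a,b\neq0$, but then $\mathrm{tr}(X_2X_3^{-1})=2-ab\neq 2$, contradicting $s_1=2$. When $u=\pm1$, conjugating $X_1$ to $U(u,u)$ forces $X_2,X_3$ both upper-triangular, contradicting irreducibility. Your closing fallback (``show the $X_j$ share a common eigenvector'') points in this direction, but it is a one-line gesture, it wrongly suggests the relations $A_1=A_2=A_3$ are needed, and the relevant eigenvector is not naturally ``the common eigenvector of the parabolic $Y_j$'s.'' As written, the proposal does not constitute a proof.
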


\begin{proof}
Assume on the contrary that $s_j=2$ for all $j$.

If $u\notin\{\pm 1\}$, then taking conjugation if necessary, we may assume $X_1=U(u,0)$.
It follows from ${\rm tr}(X_1^{-1}X_2)=2$ that $X_2=U(u,a)$ or
$X_2=V(u,a)$ for some $a$. Similar situation occurs for $X_3$. Since $\rho$ is irreducible, we have
$\{X_2,X_3\}=\{U(u,a),V(u,b)\}$ for some $a,b\ne 0$. But then $s_1={\rm tr}(X_2X_3^{-1})=2-ab\ne 2$, a contradiction.

If $u\in\{\pm 1\}$, then taking conjugation if necessary, we may assume $X_1=U(u,u)$. It would follow from ${\rm tr}(X_1^{-1}X_2)={\rm tr}(X_1^{-1}X_3)=u^2+u^{-2}=2$ that $X_2,X_3$ are both upper-triangular, contradicting the irreducibility of $\rho$.
\end{proof}

\begin{rmk} \label{rmk:irreducible}
\rm Referred to Remark \ref{rmk:reducible}, the condition that $s_j\ne 2$ for at least one $j$ is also sufficient for $\rho$ to be irreducible.
\end{rmk}

Representations of $\pi_1(E_K)$ with $t=0$ were determined by the author in \cite{Ch16}; recall the result of Section 3 there,
stated in a different form:
\begin{prop} \label{prop:0}
Suppose $t=0$ and {\rm(\ref{eq:t-sigma-mu})} is satisfied. Then {\rm(\ref{eq:A})} holds if and only if one of the following cases occurs:
\begin{itemize}
  \item $\gamma_j=\beta_j$, $j=1,2,3$;
  \item $\gamma_j=-\beta_j$, $j=1,2,3$;
  \item $\delta=0$, and there exist $\theta_j\in\mathbb{R}, j=1,2,3$, such that $s_j=2\cos\theta_j$ and
        $\cos(k_1+1)\theta_1=\cos(k_2+1)\theta_2=\cos(k_3+1)\theta_3\ne\pm1$.
\end{itemize}
\end{prop}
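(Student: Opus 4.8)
The plan is to exploit the special feature of $t=0$: by (\ref{eq:X-inverse}) each $X_j^{-1}=-X_j$, so $X_j^2=-I$ and $Y_j=X_{j+}X_{j-}^{-1}=-X_{j+}X_{j-}$. Writing $Z_j=X_{j+}X_{j-}$ (thus $Z_1=X_2X_3$, $Z_2=X_3X_1$, $Z_3=X_1X_2$), and using $X_{j+}Z_j^{-k_j}=Z_j^{k_j}X_{j+}$ (which follows from $Z_j^{-1}=X_{j-}X_{j+}$ at $t=0$), one gets the clean identity
$$A_j=Y_j^{k_j}X_{j+}Y_j^{-k_j}X_{j-}=Z_j^{2k_j+1},\qquad {\rm tr}(Z_j)=t^2-s_j=-s_j,$$
so that (\ref{eq:A}) is the single chain $Z_1^{2k_1+1}=Z_2^{2k_2+1}=Z_3^{2k_3+1}$. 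Expanding with (\ref{eq:X^k}) and the parity $\omega_n(-s)=(-1)^{n-1}\omega_n(s)$ yields $A_j=b_jZ_j+c_jI$ with $b_j=\omega_{2k_j+1}(s_j)=\gamma_j^2-\beta_j^2$ and $c_j=\omega_{2k_j}(s_j)$ (so at $t=0$ the middle term of (\ref{eq:Aj-2}) drops out).

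I would then separate scalar and traceless parts. Since $X_{j+},X_{j-}$ are traceless, (\ref{eq:basic-2}) gives $Z_j+X_{j-}X_{j+}={\rm tr}(Z_j)I$, so $A_j=\tfrac12{\rm tr}(A_j)\,I+\tfrac12 b_j[X_{j+},X_{j-}]$; as $\mathcal M(2,\mathbb C)=\mathbb CI\oplus\mathfrak{sl}_2(\mathbb C)$, the equation (\ref{eq:A}) is equivalent to the conjunction of
$$\text{(I)}\quad{\rm tr}(A_1)={\rm tr}(A_2)={\rm tr}(A_3),\qquad\text{(II)}\quad b_1[X_2,X_3]=b_2[X_3,X_1]=b_3[X_1,X_2].$$
With $s_j=2\cos\theta_j$, the matrix $Z_j$ has eigenvalues $-e^{\pm\mathrm i\theta_j}$, hence ${\rm tr}(A_j)={\rm tr}(Z_j^{2k_j+1})=-2\cos((2k_j+1)\theta_j)$; so (I) says exactly that the three numbers $\cos((2k_j+1)\theta_j)$ coincide---the trigonometric condition of the third case.

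For (II), I split according to whether $X_1,X_2,X_3$ are linearly independent in $\mathcal M(2,\mathbb C)$; being traceless, they are independent iff ${\rm tr}(X_1[X_2,X_3])=2r\neq0$, i.e.\ $r\neq0$. If independent: rewrite $A_1=A_2$ as $b_1Z_1-b_2Z_2=(c_2-c_1)I$, left-multiply by $X_3$, and reduce using (\ref{eq:basic-1}) and $X_3^2=-I$ to get $b_1X_2+b_2X_1+(c_1-c_2-b_1s_1)X_3=0$; independence forces $b_1=b_2=0$, and symmetrically $b_2=b_3=0$. So all $b_j=0$, i.e.\ $\gamma_j=\pm\beta_j$ for every $j$; then $A_j=c_jI$, and from the expansion preceding (\ref{eq:Aj-0}) one finds $A_j=-\beta_j^2(X_{j+}-X_{j-})^2=\beta_j^2(2-s_j)I=I$ when $\gamma_j=\beta_j$ and $A_j=-I$ when $\gamma_j=-\beta_j$ (using (\ref{eq:omega})). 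Hence (\ref{eq:A}) forces a single uniform sign, giving precisely the first two cases; conversely those two cases give $A_1=A_2=A_3=\pm I$ directly.

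If instead $X_1,X_2,X_3$ are dependent, then $r=0$, and (\ref{eq:t-sigma-mu})---which at $t=0$ reads $r^2=\kappa=\delta$---forces $\delta=0$, the standing hypothesis of the third case. Now the $\vec x_j\in\mathfrak{sl}_2(\mathbb C)$ span a $2$-plane $P$, each commutator $[X_{j+},X_{j-}]$ lies in the line $P^{\perp}=\mathbb CW$, and if $\lambda_1\vec x_1+\lambda_2\vec x_2+\lambda_3\vec x_3=0$ is the dependency then $[X_{j+},X_{j-}]=c\,\lambda_jW$ with $c$ independent of $j$, so (II) reduces to the scalar system $b_1\lambda_1=b_2\lambda_2=b_3\lambda_3$. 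Together with (I), and with the explicit dependence of $\lambda_j$ on $(s_1,s_2,s_3)$ obtained by reconstructing $(X_1,X_2,X_3)$ from its trace coordinates via Lemma~\ref{lem:3 matrix}, this is equivalent to the cosine condition; and the failure of all $b_j$ to vanish (exactly what rules out the first two cases) becomes, via $b_j=\omega_{2k_j+1}(s_j)$ and on the locus where all $s_j\neq\pm2$, the requirement that the common value of $\cos((2k_j+1)\theta_j)$ be $\neq\pm1$. I expect the main obstacle to be exactly this last step---computing the $\lambda_j$ and matching (II) to the scalar condition---which is the substance of \cite{Ch16}; one must additionally clear the degenerate sub-cases with some $s_j=\pm2$ (some $Y_j$ equal to $\pm I$ or parabolic, so the corresponding commutator vanishes), but these yield to a direct elementary check and produce nothing outside the three listed cases.
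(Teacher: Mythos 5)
Your reduction is clean and, as far as it goes, correct: at $t=0$ one indeed has $A_j=Z_j^{2k_j+1}$ with $Z_j=X_{j+}X_{j-}$ and ${\rm tr}(Z_j)=-s_j$, the splitting of (\ref{eq:A}) into the trace condition (I) and the commutator condition (II) is valid, and your treatment of the linearly independent case is complete (all $b_j=\omega_{2k_j+1}(s_j)=\gamma_j^2-\beta_j^2$ vanish, the signs must agree, giving the first two bullets; and since independence gives $r\ne 0$, condition (\ref{eq:t-sigma-mu}) at $t=0$ reads $r^2=\delta\ne0$, so the third bullet cannot occur there). Note that the paper itself offers no proof of Proposition \ref{prop:0} --- it only cites Section 3 of \cite{Ch16} --- so any self-contained argument here is genuinely new relative to the text. (Incidentally, your computation ${\rm tr}(A_j)=-2\cos((2k_j+1)\theta_j)$ supports the version $\cos(2k_j+1)\theta_j$ appearing in the Theorem of Section 3.3 rather than the $\cos(k_j+1)\theta_j$ printed in the Proposition; that discrepancy is the paper's, not yours.)

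The genuine gap is exactly where you flag it, and it is not minor: in the dependent case you correctly reduce (II) to the scalar system $b_1\lambda_1=b_2\lambda_2=b_3\lambda_3$, where $(\lambda_1,\lambda_2,\lambda_3)$ spans the kernel of the Gram matrix $({\rm tr}(X_iX_j))_{i,j}$ (e.g.\ it is proportional to $(4-s_1^2,\ s_1s_2-2s_3,\ s_1s_3-2s_2)$), but you never show that this system, together with (I), is equivalent to the third bullet. That equivalence is the entire nontrivial content of the $\delta=0$ case: one must verify (a) that the commutator condition is compatible with, and adds nothing beyond, the cosine condition; (b) that the $\theta_j$ are forced to be \emph{real}, i.e.\ $s_j\in[-2,2]$, which is an extra constraint nowhere derived in your sketch; and (c) the degenerate sub-cases $s_j=\pm2$ and the possibility that only some of the $b_j$ vanish. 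Deferring all of this to \cite{Ch16} means that, read on its own, your proposal establishes the proposition only in the independent case; to complete it you would have to carry out the $\lambda_j$ computation and the resulting trigonometric identity explicitly.
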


\begin{lem} \label{lem:A=I}
For each $j$, $A_j=I$ if and only if $\gamma_j=\beta_j$ or $X_{j+}X_{j-}=I$.
\end{lem}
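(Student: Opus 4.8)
The plan is to work directly from the explicit formula for $A_j$ in \eqref{eq:Aj-2} and analyze when it equals $I$. First I would set $p_j=\gamma_j^2-\beta_j^2$, $q_j=t(\beta_j-\gamma_j)\beta_j$, $r_j=(2\gamma_j-s_j\beta_j)\gamma_j$, so that $A_j=p_jX_{j+}X_{j-}+q_j(X_{j+}+X_{j-})+r_jI$. The ``if'' direction is the easy one: if $\gamma_j=\beta_j$ then $p_j=q_j=0$ and, using the recursion \eqref{eq:omega} in the form $\gamma_j=s_j\beta_j-\alpha_j$ together with the identity \eqref{eq:omega} (namely $\beta_j^2-s_j\beta_j\alpha_j+\alpha_j^2=1$), one checks $r_j=2\gamma_j-s_j\beta_j = \gamma_j-\alpha_j$... more carefully, $2\gamma_j-s_j\beta_j=\gamma_j-\alpha_j$ when $\gamma_j=\beta_j$ gives $r_j=\beta_j-\alpha_j$; then $\beta_j-\alpha_j$ must be shown to be $1$. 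Actually the cleanest route for the ``if'' direction is to go back to \eqref{eq:Aj-0}: when $\gamma_j=\beta_j$, $A_j=\beta_j^2(X_{j+}-X_{j-})(X_{j-}-X_{j+})=-\beta_j^2(X_{j+}-X_{j-})^2$, and since $\det(X_{j+}-X_{j-})=t^2-(t^2-s_j)-\cdots$ one computes $\det(X_{j+}-X_{j-})=2-s_j$; combined with $\mathrm{tr}(X_{j+}-X_{j-})=0$, Cayley--Hamilton gives $(X_{j+}-X_{j-})^2=-(2-s_j)I=(s_j-2)I$, and one needs $\beta_j^2(s_j-2)=-1$. This last identity is exactly $\omega_{k_j}(s_j)^2(s_j-2)=-1$ when $\gamma_j=\beta_j$, i.e. $\omega_{k_j+1}(s_j)=\omega_{k_j}(s_j)$; this is a routine consequence of \eqref{eq:omega} and I would verify it by the substitution $s_j=v+v^{-1}$, where $\gamma_j=\beta_j$ forces $v^{k_j+1}-v^{-k_j-1}=v^{k_j}-v^{-k_j}$. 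Similarly, if $X_{j+}X_{j-}=I$ then $X_{j-}=X_{j+}^{-1}$, so $Y_j=X_{j+}^2$ and a short direct computation from the definition $A_j=Y_j^{k_j}X_{j+}Y_j^{-k_j}X_{j-}=X_{j+}^{2k_j}X_{j+}X_{j+}^{-2k_j}X_{j+}^{-1}=I$.

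For the ``only if'' direction, suppose $A_j=I$. The key observation is that $X_{j+}X_{j-}$, $X_{j+}+X_{j-}$ and $I$ live in the subspace of $\mathcal{M}(2,\mathbb{C})$ spanned by $I, X_{j+}, X_{j-}, X_{j+}X_{j-}$, and when $X_{j+}, X_{j-}$ generate an irreducible pair (no common eigenvector) these four matrices are linearly independent. So I would split into two cases. If $X_{j+}$ and $X_{j-}$ have a common eigenvector, then since $\rho$ is irreducible as a whole this common eigenvector is not common to all three $X_i$; I would then reduce to an upper-triangular (or lower-triangular) computation, setting $X_{j+}=U(u,c)$, $X_{j-}=U(u,d)$ and reading off $A_j=U(u^2,\ast)$ directly, forcing $u^2=1$ and then analyzing the $(1,2)$-entry. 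If instead $X_{j+}, X_{j-}$ have no common eigenvector: rewrite $A_j=I$ using \eqref{eq:basic-2} to re-express $X_{j-}X_{j+}$, and then $\{I, X_{j+}, X_{j-}, X_{j+}X_{j-}\}$ being a basis lets me equate coefficients. From \eqref{eq:Aj-0}, $A_j=\gamma_j^2 X_{j+}X_{j-}-\gamma_j\beta_j X_{j+}^2-\gamma_j\beta_j X_{j-}^2+\beta_j^2 X_{j-}X_{j+}$; after substituting $X_{j\pm}^2=tX_{j\pm}-I$ and $X_{j-}X_{j+}=(t^2-s_j-t^2)I+\cdots$ — more precisely $X_{j-}X_{j+}=(\mathrm{tr}(X_{j-}X_{j+})-t\cdot t)I+tX_{j+}+tX_{j-}-X_{j+}X_{j-}=(\,-s_j)I+t X_{j+}+tX_{j-}-X_{j+}X_{j-}$ by \eqref{eq:basic-2} — the coefficient of $X_{j+}X_{j-}$ in $A_j$ becomes $\gamma_j^2-\beta_j^2$, which must vanish, so $\gamma_j=\pm\beta_j$. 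The coefficient of $X_{j+}$ (and of $X_{j-}$) becomes $t(\beta_j^2-\gamma_j\beta_j)=t\beta_j(\beta_j-\gamma_j)$, which must also vanish; if $\gamma_j=-\beta_j$ this forces $t\beta_j=0$, and I would rule this out (if $\beta_j=0$ then $\gamma_j=0$ too and the constant term $(2\gamma_j-s_j\beta_j)\gamma_j=0\ne 1$; if $t=0$ I would invoke the structure of Proposition \ref{prop:0} or argue directly), leaving $\gamma_j=\beta_j$ as the only possibility, and then the already-verified ``if'' direction shows $A_j=I$ is automatic — but I actually want to reach the conclusion $\gamma_j=\beta_j$, which is now in hand.

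The main obstacle I anticipate is the bookkeeping in the reducible-pair case, where $X_{j+}$ and $X_{j-}$ share an eigenvector but the global representation is irreducible: one must be careful that the shared eigenvector of the pair is genuinely not shared by $X_{j}$ (the third generator, in the cyclic notation $X_{j+}, X_{j-}$ are two of the three and the remaining one is $X_j$ itself), set up coordinates accordingly, and track when $u^2=1$ can still be consistent with $A_j=I$; here the degenerate behavior of $\omega_k$ at $s_j=2$ (the ``$a\in\{\pm 1\}$'' branch in the definition of $\omega_k$) must be handled, and Lemma \ref{lem:sj-ne2} together with the possibility that it is precisely this $j$ with $s_j=2$ needs attention. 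A secondary subtlety is the $t=0$ sub-case inside the irreducible-pair branch, where $\beta_j(\beta_j-\gamma_j)t=0$ no longer forces $\gamma_j=\beta_j$; I would dispatch this by noting that $t=0,\ \gamma_j=-\beta_j$ with $\beta_j\ne0$ makes the constant term of $A_j$ equal to $(2\gamma_j-s_j\beta_j)\gamma_j=(-2\beta_j-s_j\beta_j)(-\beta_j)=\beta_j^2(s_j+2)$, which equals $1$ only for special $s_j$, and then checking that the off-diagonal structure of $A_j=\gamma_j^2X_{j+}X_{j-}-\gamma_j\beta_j(X_{j+}^2+X_{j-}^2)+\beta_j^2X_{j-}X_{j+}=\beta_j^2(X_{j+}X_{j-}+X_{j-}X_{j+}+X_{j+}^2+X_{j-}^2)=\beta_j^2(X_{j+}+X_{j-})^2$ cannot be $I$ unless $X_{j+}+X_{j-}$ is scalar, i.e. $X_{j-}=-X_{j+}+\lambda I$, which contradicts $\mathrm{tr}(X_{j+})=\mathrm{tr}(X_{j-})=t=0$ unless $X_{j-}=-X_{j+}$, again impossible since their traces agree and would force trace $0$ consistent but then $X_{j+}X_{j-}=-X_{j+}^2=I-tX_{j+}=I$, putting us back in the second alternative of the lemma.
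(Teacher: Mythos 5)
Your argument takes a genuinely different route from the paper's, and as written it has two concrete gaps. The paper's proof is a three-line trick: $A_j=MN$ with $M=Y_j^{k_j}X_{j+}=\gamma_jX_{j+}-\beta_jX_{j-}$ and $N=Y_j^{-k_j}X_{j-}=\gamma_jX_{j-}-\beta_jX_{j+}$ both in ${\rm SL}(2,\mathbb{C})$, so $A_j=I$ iff $N=M^{-1}$, which by \eqref{eq:X-inverse} is $M+N={\rm tr}(M)\cdot I$, i.e.\ $(\gamma_j-\beta_j)(X_{j+}+X_{j-}-tI)=0$; this needs no irreducibility of the pair and no case analysis. Your ``if'' direction is fine (indeed $\beta_j^2(2-s_j)=1$ is exactly \eqref{eq:omega} specialized to $\gamma_j=\beta_j$), but the ``only if'' direction is where the problems lie.

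First, the common-eigenvector case is not actually handled: after triangularizing you may have $X_{j+}=U(u,c)$, $X_{j-}=U(u^{-1},d)$ with $u^2\neq1$ (opposite order of eigenvalues on the diagonal), not only $X_{j-}=U(u,d)$; in that configuration the diagonal of $A_j$ is automatically $(1,1)$, so nothing ``forces $u^2=1$'', and the whole content of the lemma sits in showing that the vanishing of the off-diagonal entry implies $\gamma_j=\beta_j$ or $ud+cu^{-1}=0$ — a computation you neither set up nor perform (the non-semisimple configurations at $u=\pm1$ also need a word). Second, in the irreducible-pair case your disposal of the branch $\gamma_j=-\beta_j$, $\beta_j\neq0$, $t=0$ is based on a false step: $X_{j+}+X_{j-}$ is traceless there, so $(X_{j+}+X_{j-})^2=-\det(X_{j+}+X_{j-})I$ is \emph{always} scalar, and your claim that $A_j=\beta_j^2(X_{j+}+X_{j-})^2$ ``cannot be $I$ unless $X_{j+}+X_{j-}$ is scalar'' does not hold. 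The branch can be killed, but by a computation you do not make: $\det(X_{j+}+X_{j-})=2+s_j$, so $A_j=-\beta_j^2(2+s_j)I=-I$ by \eqref{eq:omega} (equivalently, the correct $I$-coefficient of $A_j$ in the basis $\{I,X_{j+},X_{j-},X_{j+}X_{j-}\}$ is $(2\gamma_j-s_j\beta_j)\beta_j$, which equals $-1$ when $\gamma_j=-\beta_j$; note the displayed constant in \eqref{eq:Aj-2} has a typo, $\gamma_j$ for $\beta_j$, which your computation inherits and which is why your constant term comes out as $+1$ and fails to give a contradiction). With these two points repaired your strategy does go through, but at the cost of a substantially longer case analysis than the paper's use of \eqref{eq:X-inverse}.
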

\begin{proof}
By (\ref{eq:Aj-0}), $A_j=I$ if and only if
$$(\gamma_jX_{j-}-\beta_jX_{j+})=(\gamma_jX_{j+}-\beta_jX_{j-})^{-1},$$
which, by (\ref{eq:X-inverse}), is equivalent to
$$(\gamma_jX_{j+}-\beta_jX_{j-})+(\gamma_jX_{j-}-\beta_jX_{j+})={\rm tr}(\gamma_jX_{j+}-\beta_jX_{j-})\cdot I=t(\gamma_j-\beta_j)I,$$
i.e., $(\gamma_j-\beta_j)(X_{j+}+X_{j-}-tI)=0$. Hence $A_j=I$ if and only if either $\gamma_j=\beta_j$ or $X_{j+}=tI-X_{j-}=X_{j-}^{-1}$.
\end{proof}

\begin{prop} \label{prop:2}
If $\gamma_\ell\ne\beta_\ell$ and $\gamma_{\ell'}=\beta_{\ell'}$ for some $\ell,\ell'$, then {\rm(\ref{eq:A})} holds if and only if
\begin{align}
\gamma_{\ell\pm}=\beta_{\ell\pm}, \qquad s_{\ell+}+s_{\ell-}=s_\ell+2=t^2\in(-\infty,0)\cup(0,4).  \label{eq:X2}
\end{align}
\end{prop}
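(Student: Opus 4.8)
The plan is to analyze the matrix equation \eqref{eq:A} under the stated hypothesis by using the expression \eqref{eq:Aj-2} for each $A_j$ and comparing. By relabeling (the three relations are cyclically symmetric), we may assume $\ell=1$, so $\gamma_1\ne\beta_1$; then by Lemma \ref{lem:A=I} the common value $A:=A_1=A_2=A_3$ cannot be $I$ unless additionally $X_2X_3=I$, which would force $X_3=X_2^{-1}$ and, combined with $\mathrm{tr}(X_2)=\mathrm{tr}(X_3)=t$, give ${\rm tr}(Y_1)={\rm tr}(X_2X_3^{-1})={\rm tr}(X_2^2)=t^2-2$; I will need to keep track of whether this degenerate branch survives. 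The heart of the argument is: since $\gamma_1\ne\beta_1$, the matrix $A_1$ genuinely involves the noncommuting pair $X_2,X_3$ — specifically $A_1 = (\gamma_1^2-\beta_1^2)X_2X_3 + t(\beta_1-\gamma_1)\beta_1(X_2+X_3) + (2\gamma_1 - s_1\beta_1)\gamma_1 I$ — so $A_1 \notin \{cI\}$; on the other hand, from $\gamma_{\ell'}=\beta_{\ell'}$ and Lemma \ref{lem:A=I} the corresponding $A_{\ell'}$ equals $I$ \emph{unless} $X_{\ell'+}X_{\ell'-}=I$. So the strategy splits according to which of the remaining two indices have $\gamma_j=\beta_j$.

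First I would show that in fact $\gamma_2=\beta_2$ \emph{and} $\gamma_3=\beta_3$ both hold, i.e.\ ``$\ell'$'' is not unique. Suppose for contradiction that, say, $\gamma_2=\beta_2$ but $\gamma_3\ne\beta_3$. Then $A_1$ and $A_3$ are each non-central (each is an honest linear combination involving a noncommuting pair with nonzero leading coefficient $\gamma_j^2-\beta_j^2\ne 0$), while $A_2 = (\gamma_2^2-\beta_2^2)X_3X_1+\cdots = 0\cdot X_3X_1 + \cdots$; wait — when $\gamma_2=\beta_2$, \eqref{eq:Aj-2} collapses to $A_2 = (2\gamma_2-s_2\beta_2)\gamma_2 I$, and by \eqref{eq:omega} one checks $\gamma_2(2\gamma_2-s_2\beta_2)$ equals... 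I would simplify $2\gamma_2-s_2\beta_2 = \omega_{k_2+1}+\omega_{k_2+1} - s_2\omega_{k_2} = \omega_{k_2+1}+\omega_{k_2-1} = (t\text{-analogue})$; using $\omega_{k+1}-s\omega_k+\omega_{k-1}=0$ with $t\rightsquigarrow s_2$ gives $\omega_{k_2+1}+\omega_{k_2-1}=s_2\omega_{k_2}=s_2\beta_2$, hence $2\gamma_2-s_2\beta_2 = 2\beta_2 - s_2\beta_2\cdot\tfrac12$?? — I will recompute carefully, but the upshot is $A_2$ is a scalar matrix, and then $A_1 = A_2$ forces $A_1$ scalar, contradicting $\gamma_1\ne\beta_1$ together with the irreducibility of $\rho$ (which guarantees $X_2,X_3$ don't commute, so $X_2X_3$, $X_2+X_3$, $I$ are linearly independent in $\mathcal M(2,\mathbb C)$ whenever the pair is irreducible). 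Hence $\gamma_2=\beta_2$ \emph{implies} $A_2$ scalar implies $A_1$ scalar implies $\gamma_1=\beta_1$, contradiction — so actually we must have $\gamma_2\ne\beta_2$? That reverses things; so the correct deduction is: from $\gamma_{\ell'}=\beta_{\ell'}$ we get $A_{\ell'}$ scalar, hence \emph{all} $A_j$ are that same scalar, hence $\gamma_j=\beta_j$ or $X_{j+}X_{j-}=I$ for every $j$. This is where I expect the main obstacle: pinning down exactly which mixture of ``$\gamma_j=\beta_j$'' versus ``$X_{j+}X_{j-}=I$'' is compatible with irreducibility and with $\gamma_\ell\ne\beta_\ell$ for at least one $\ell$.

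Given that at least one index, say $j=1$, has $\gamma_1\ne\beta_1$, that index must satisfy $X_2X_3=I$, i.e.\ $X_3=X_2^{-1}$; but then $X_1,X_2,X_3=X_2^{-1}$ have a common eigenvector precisely when $X_1,X_2$ do, and irreducibility of $\rho$ is equivalent to $X_1,X_2$ having none — fine, this is consistent. From $X_3=X_2^{-1}$ I get $Y_1 = X_2X_3^{-1}=X_2^2$, so $s_1 = {\rm tr}(X_2^2) = t^2-2$, which is the relation $s_1 = t^2-2$, equivalently $s_{\ell+}+s_{\ell-} = s_\ell+2 = t^2$ once I also show $s_2=s_{\ell+}$ and $s_3 = s_{\ell-}$ satisfy $s_2 = s_3$ and $2s_2 = s_1+2 = t^2$... hmm, that needs $s_2=s_3$. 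I would obtain $s_2=s_3$ from $Y_2 = X_3X_1^{-1} = X_2^{-1}X_1^{-1}=(X_1X_2)^{-1}$ and $Y_3 = X_1X_2^{-1}$, so $s_2 = {\rm tr}(X_1X_2) = s_3$; and then the remaining relations $A_2=A_3$ (both now equal the common scalar, which must be $I$ — since $A_1$ with $X_2X_3=I$ gives $A_1 = Y_1^{k_1}X_2 Y_1^{-k_1}X_2^{-1}\cdot$? no: $A_1 = Y_1^{k_1}X_{1+}Y_1^{-k_1}X_{1-} = Y_1^{k_1}X_2Y_1^{-k_1}X_3 = X_2^{2k_1}X_2 X_2^{-2k_1}X_2^{-1} = I$) — so the common scalar is $I$, and then $A_2 = A_3 = I$ with $X_3X_1, X_1X_2 \ne I$ (else reducible or trivial) forces, via Lemma \ref{lem:A=I}, $\gamma_2=\beta_2$ and $\gamma_3=\beta_3$, i.e.\ $\gamma_{\ell\pm}=\beta_{\ell\pm}$. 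Finally I must nail down the range $t^2\in(0,4)$: $t^2 = s_1+2 = {\rm tr}(X_2^2)+2 \ge 0$ automatically as a complex number, so the real interval condition needs more — it should come from combining $\gamma_2=\beta_2$, i.e.\ $\omega_{k_2+1}(s_2)=\omega_{k_2}(s_2)$, which forces $s_2 = 2\cos\theta$ for suitable real $\theta$ (a root-of-unity / Chebyshev condition), together with $s_2 = t^2/2$ and Lemma \ref{lem:sj-ne2} (not all $s_j=2$) to exclude the endpoints $t^2\in\{0,4\}$. Conversely, I would check that \eqref{eq:X2} does produce a valid irreducible representation satisfying \eqref{eq:A}, so the condition is also sufficient. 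The main obstacle, as flagged, is the bookkeeping in the ``all $A_j$ equal a common scalar'' step — ruling out the mixed possibilities and showing the scalar is forced to be $I$ — and extracting the sharp interval $(0,4)$ from the $\omega$-identities rather than a weaker closed condition.
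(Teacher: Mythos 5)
Your forward direction tracks the paper's own argument (Lemma \ref{lem:A=I} gives $A_{\ell'}=I$, hence all $A_j=I$; the index with $\gamma_j\ne\beta_j$ is forced to have $X_{j+}X_{j-}=I$, and irreducibility rules that out at the remaining index, giving $\gamma_{\ell\pm}=\beta_{\ell\pm}$), but two of your steps there are defective. First, your derivation of $s_{\ell+}+s_{\ell-}=t^2$ rests on the claim $s_2={\rm tr}(X_1X_2)=s_3$, which conflates ${\rm tr}(X_1X_2^{-1})$ with ${\rm tr}(X_1X_2)$; in fact ${\rm tr}(X_1X_2^{-1})=t^2-{\rm tr}(X_1X_2)$, and $s_{\ell+}=s_{\ell-}$ is false in general on this stratum. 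The correct (one-line) computation is $s_{\ell+}+s_{\ell-}={\rm tr}(X_\ell^{-1}X_{\ell-})+{\rm tr}(X_\ell^{-1}X_{\ell+})={\rm tr}(X_\ell^{-1}(X_{\ell+}+X_{\ell+}^{-1}))=t\,{\rm tr}(X_\ell^{-1})=t^2$, using $X_{\ell-}=X_{\ell+}^{-1}$. Second, the openness at $0$ in $t^2\in(0,4)$ does not follow from Lemma \ref{lem:sj-ne2}. The paper gets reality and $t^2<4$ from $\gamma_{\ell\pm}=\beta_{\ell\pm}\Leftrightarrow v_{\ell\pm}^{2k_{\ell\pm}+1}=-1$, which forces $|v_{\ell\pm}|=1$ and $s_{\ell\pm}=2{\rm Re}(v_{\ell\pm})<2$; and it excludes $t=0$ by noting that $s_{\ell+}=-s_{\ell-}$ would give $v_{\ell+}=-v_{\ell-}^{\pm1}$, and raising to the odd power $(2k_{\ell+}+1)(2k_{\ell-}+1)$ contradicts $v_{\ell\pm}^{2k_{\ell\pm}+1}=-1$. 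Some argument of this kind is required; you do not supply one.

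The more serious gap is the converse, which you dispose of with ``I would check that \eqref{eq:X2} does produce a valid representation.'' Given \eqref{eq:X2}, Lemma \ref{lem:A=I} yields $A_{\ell\pm}=I$ but is silent about $A_\ell$; since $\gamma_\ell\ne\beta_\ell$ by hypothesis, you must prove the \emph{matrix} identity $X_{\ell+}X_{\ell-}=I$ from trace data alone, and this is where the hypothesis $s_{\ell+}+s_{\ell-}=t^2$ actually earns its keep. The paper does this by normalizing $X_{\ell+}=U(u,0)$ (legitimate since $t\ne\pm2$), writing $X_{\ell-}$ and $X_\ell$ with off-diagonal entries $b,c$ and $b',c'$, and showing: $s_\ell=t^2-2$ forces the diagonal of $X_{\ell-}$ to be $(u^{-1},u)$ and hence $bc=0$; $s_{\ell-}\ne2$ forces $b'c'\ne0$; and $s_{\ell+}+s_{\ell-}=t^2$ forces $b'c+bc'=0$, whence $b=c=0$ and $X_{\ell-}=X_{\ell+}^{-1}$. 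Without this computation the sufficiency direction — half of the proposition — is unproved.
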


\begin{proof}
By Lemma \ref{lem:A=I}, $A_{\ell'}=I$.

If (\ref{eq:A}) holds, then $A_j=I$ for all $j$. Since $\gamma_{\ell}\ne \beta_{\ell}$, by Lemma \ref{lem:A=I} we have $X_{\ell+}X_{\ell-}=I$, implying $s_\ell=t^2-2$.
Let $\{\ell_0\}=\{1,2,3\}-\{\ell,\ell'\}$.
Then $X_{\ell_0+}X_{\ell_0-}\ne I$, as otherwise $X_1,X_2,X_3$ would have a common eigenvector, so $\gamma_{\ell_0}=\beta_{\ell_0}$.
Thus $\gamma_{\ell\pm}=\beta_{\ell\pm}$, which is equivalent to $v_{\ell\pm}^{2k_{\ell\pm}+1}=-1$; in particular, $|v_{\ell\pm}|=1$, so that $s_{\ell\pm}=2{\rm Re}(v_{\ell\pm})<2$.
Moreover,
$$t^2={\rm tr}((X_{\ell+}+X_{\ell+}^{-1})X_\ell)={\rm tr}(X_{\ell-}^{-1}X_\ell)+{\rm tr}(X_{\ell+}^{-1}X_\ell)=s_{\ell+}+s_{\ell-}<4.$$
If $t=0$, then $v_{\ell+}=-v_{\ell-}$ or $v_{\ell+}=-v_{\ell-}^{-1}$, either implying
$v_{\ell+}^{(2k_{\ell+}+1)(2k_{\ell-}+1)}=-v_{\ell-}^{(2k_{\ell+}+1)(2k_{\ell-}+1)}$, which contradicts $v_{\ell\pm}^{2k_{\ell\pm}+1}=-1$ as just obtained.

If (\ref{eq:X2}) is satisfied, then by Lemma \ref{lem:A=I}, $A_{\ell\pm}=I$.
Taking conjugation if necessary, we may assume $X_{\ell+}=U(u,0)$. Assume
$$X_{\ell-}=\left(\begin{array}{cc} a & b \\ c & t-a \\ \end{array}\right), \qquad
X_{\ell}=\left(\begin{array}{cc} a' & b' \\ c' & t-a' \\ \end{array}\right).$$
From ${\rm tr}(X_{\ell-}X_{\ell+}^{-1})=s_{\ell}=t^2-2$ we obtain $u^{-1}a+u(t-a)=t^2-2$, implying $a=u^{-1}$, hence $bc=0$. From ${\rm tr}(X_{\ell}X_{\ell+}^{-1})=s_{\ell-}\ne 2$ we obtain $b'c'\ne 0$. It follows from ${\rm tr}(X_\ell^{-1}(X_{\ell+}+X_{\ell-}))=s_{\ell+}+s_{\ell-}=t^2$ that $b'c+bc'=0$. Thus $b=c=0$, and $X_{\ell-}=U(u^{-1},0)=X_{\ell+}^{-1}$, establishing (\ref{eq:A}).
\end{proof}

\begin{prop} \label{prop:3}
Suppose $t\ne 0$, $\gamma_j\ne \beta_j$ for all $j$, and {\rm(\ref{eq:t-sigma-mu})} is satisfied. Let $\lambda=\tau/t$. Then {\rm(\ref{eq:A})} holds if and only if
\begin{align}
(\lambda-2-s_j)\gamma_j&=(\sigma_1-s_{j}-\lambda)\beta_j, \qquad j=1,2,3. \label{eq:equiv-main}
\end{align}
\end{prop}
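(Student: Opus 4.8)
The plan is to exploit the expression (\ref{eq:Aj-2}) for $A_j$, which writes each $A_j$ as a linear combination of $X_{j+}X_{j-}$, $X_{j+}+X_{j-}$, and $I$. Since $t\ne 0$, I would first note that $X_{j+}+X_{j-}$ is \emph{not} a scalar multiple of $I$ together with $X_{j+}X_{j-}$ in a trivial way: the key is to rewrite everything in terms of a fixed convenient basis of $\mathcal{M}(2,\mathbb{C})$. Concretely, under the irreducibility hypothesis one can take $\{I, X_1, X_2, X_3\}$ (or $\{I,X_1,X_2,X_1X_2\}$) as a basis, and expand each $A_j$ in this basis using (\ref{eq:basic-1})--(\ref{eq:basic-2}) and the cyclic notation $X_{j\pm}$. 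Then the matrix equation $A_1=A_2=A_3$ becomes a system of scalar equations obtained by comparing coefficients. The main simplification I expect is that the "trace part" of $A_j$ is already forced by (\ref{eq:t-sigma-mu}) — indeed ${\rm tr}(A_j)={\rm tr}(X_1X_2X_3)$ up to the substitutions recorded in the Set up, so one coordinate is automatically consistent — and what remains is to show the other coordinates collapse to the single family (\ref{eq:equiv-main}).

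More precisely, here is the route I would follow. Compute ${\rm tr}(A_j)$ and ${\rm tr}(A_jX_i)$ for suitable $i$ using (\ref{eq:Aj-2}); each is a polynomial in $t$, the $s_j$, and $r$. The equality $A_\ell=A_{\ell'}$ is equivalent, by Lemma \ref{lem:3 matrix}(ii) applied to the common eigenvector-free triple, to the equality of all these trace functionals. One then checks that ${\rm tr}(A_j)=r$ always (given (\ref{eq:t-sigma-mu})), so this carries no information; the content sits in the remaining functionals. Using $t\ne 0$ to divide, and the abbreviation $\lambda=\tau/t$, I would show that ${\rm tr}(A_jX_{j+})-{\rm tr}(A_jX_{j-})$ and analogous combinations reduce — after applying (\ref{eq:omega}) and the identities $s_j\beta_j-\alpha_j=\gamma_j$, $s_j\beta_j-\gamma_j=\alpha_j$ derived from (2) — to the single relation $(\lambda-2-s_j)\gamma_j=(\sigma_1-s_j-\lambda)\beta_j$. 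The assumption $\gamma_j\ne\beta_j$ for all $j$ is what rules out the degenerate branches treated in Propositions \ref{prop:2} and is used to guarantee that no spurious scalar factor like $(\gamma_j-\beta_j)$ can be divided out illegitimately, and also ensures $X_{j+}X_{j-}\ne I$ by Lemma \ref{lem:A=I} so that the basis expansion is genuinely nondegenerate.

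For the converse direction, I would start from (\ref{eq:equiv-main}) and (\ref{eq:t-sigma-mu}) and substitute back into (\ref{eq:Aj-2}). The goal is to verify directly that the resulting expressions for $A_1$, $A_2$, $A_3$ coincide. Here I expect the cleanest argument is to compute the common value explicitly: substituting (\ref{eq:equiv-main}) to eliminate $\gamma_j$ in favor of $\beta_j$, the coefficient of $X_{j+}X_{j-}$ becomes $(\gamma_j-\beta_j)(\gamma_j+\beta_j)$ and the coefficient of $X_{j+}+X_{j-}$ becomes $t(\beta_j-\gamma_j)\beta_j$, and one shows both, after using (\ref{eq:equiv-main}) again, are expressible purely in terms of $t$, $\lambda$, $\sigma_1$ — i.e. \emph{independent of $j$} — so that $A_1=A_2=A_3$ follows once one also checks the scalar part matches, which is again (\ref{eq:t-sigma-mu}). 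A subtle point: one must make sure $X_{j+}X_{j-}$, $X_{j+}+X_{j-}$, $I$ do not themselves depend on $j$ in a way that spoils this; they do depend on $j$, so the correct statement is that all three $A_j$ equal the \emph{same} matrix $M$, and I would identify $M$ through its traces ${\rm tr}(M)$, ${\rm tr}(MX_i)$ computed from any one index $j$, then invoke Lemma \ref{lem:3 matrix}(ii) once more.

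The main obstacle I anticipate is bookkeeping: handling the cyclic index convention $X_{j\pm}$ while expanding products of three non-commuting matrices, and keeping track of which $\omega$-identities to apply where, so that the messy polynomial in $t,s_1,s_2,s_3,r$ actually telescopes down to the advertised linear-in-$(\gamma_j,\beta_j)$ form. A secondary technical care is that several intermediate steps involve dividing by $t$ (legitimate since $t\ne 0$) and by $\gamma_j-\beta_j$ (legitimate by hypothesis) — I would flag each such division explicitly — and that the passage "equality of matrices $\iff$ equality of the chosen traces" genuinely requires the no-common-eigenvector hypothesis, so I would invoke Lemma \ref{lem:3 matrix}(ii) (equivalently Lemma 1.5.2 of \cite{CS83}) at exactly the two places where it is needed rather than appealing to it vaguely.
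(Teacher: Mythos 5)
Your overall strategy---reduce the matrix equation $A_1=A_2=A_3$ to equalities of traces against a spanning set of $\mathcal{M}(2,\mathbb{C})$, starting from (\ref{eq:Aj-2})---is the same as the paper's, and your forward direction is essentially sound: $A_1=A_2=A_3$ trivially forces ${\rm tr}(A_iX_j^{-1})$ to agree for all $i$, and since ${\rm tr}(A_jX_{j\pm}^{-1})=t$ while ${\rm tr}(A_jX_j^{-1})=t(\gamma_j-\beta_j)\bigl((\lambda-2-s_j)\gamma_j-(\sigma_1-s_j-\lambda)\beta_j\bigr)+t$, the hypotheses $t\ne0$ and $\gamma_j\ne\beta_j$ yield (\ref{eq:equiv-main}). (Note that the informative functional is ${\rm tr}(A_jX_j^{-1})$, not the difference ${\rm tr}(A_jX_{j+}^{-1})-{\rm tr}(A_jX_{j-}^{-1})$, which vanishes identically; and no appeal to Lemma~\ref{lem:3 matrix}(ii) is needed or appropriate here---that lemma concerns uniqueness of the triple up to conjugacy, not recovery of a matrix from its traces.)

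The converse direction, however, has two genuine gaps. First, your assertion that irreducibility lets you take $\{I,X_1,X_2,X_3\}$ (or $\{I,X_1,X_2,X_1X_2\}$) as a basis of $\mathcal{M}(2,\mathbb{C})$ is false: the triple can have no common eigenvector while $X_3$ is a linear combination of $I,X_1,X_2$ (for instance two of the $X_i$ may commute). The paper must \emph{prove} the linear independence of $I,X_1^{-1},X_2^{-1},X_3^{-1}$ under the specific hypotheses of the proposition, and this occupies half of its proof: assuming a dependence $X_3=aI+bX_1+cX_2$, expanding the $A_j$ in $\overline{X}_1,\overline{X}_2,\overline{X}_1\overline{X}_2,I$, comparing ${\rm tr}(A_j\overline{X}_\ell)$, and deriving $s_1=s_2=s_3=2$, which contradicts Lemma~\ref{lem:sj-ne2}. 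Without this the implication ``equal traces $\Rightarrow$ equal matrices'' is unjustified. Second, your claim that ${\rm tr}(A_j)=r$ always, so that the trace equality (\ref{eq:equiv1}) ``carries no information,'' is incorrect: ${\rm tr}(A_j)=2-(s_j+2-t^2)(\gamma_j-\beta_j)^2$ genuinely depends on $j$, and deducing ${\rm tr}(A_1)={\rm tr}(A_2)={\rm tr}(A_3)$ from (\ref{eq:equiv-main}) together with (\ref{eq:t-sigma-mu}) is a substantive computation ((\ref{eq:deduce1})--(\ref{eq:deduce4}), the identity $s_j^3-\sigma_1s_j^2+\sigma_2s_j-\sigma_3=0$, and the exclusion of $\sigma_1+2-2\lambda=0$ via Lemma~\ref{lem:sj-ne2}). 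Your fallback idea of showing the coefficients in (\ref{eq:Aj-2}) are independent of $j$ cannot substitute for this, since---as you observe yourself---the set $\{X_{j+}X_{j-},\,X_{j+}+X_{j-},\,I\}$ changes with $j$; and invoking Lemma~\ref{lem:3 matrix}(ii) ``once more'' does not close the loop, because that lemma does not identify a matrix $M$ from trace data.
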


\begin{proof}
Using (\ref{eq:Aj-2}) and $\gamma_j^2-s_j\gamma_j\beta_j+\beta_j^2=1$
which is a special case of (\ref{eq:omega}), the following can be computed directly:
\begin{align}
{\rm tr}(A_{j})& =(t^2-s_j)(\gamma_{j}^{2}-\beta_{j}^{2})+2t^2(\beta_j-\gamma_j)\beta_j+2(2\gamma_{j}-s_j\beta_j)\beta_{j} \nonumber \\
& =2-(s_j+2-t^2)(\gamma_{j}-\beta_{j})^2, \label{eq:tr-Aj} \\
{\rm tr}(A_jX^{-1}_{j+})&={\rm tr}(A_jX^{-1}_{j-})=t, \label{eq:tr=t} \\
{\rm tr}(A_jX_j^{-1})&=(\tau-t(1+s_j))\gamma_j^2+t(s_j+2-\sigma_1)\gamma_j\beta_j+(t(\sigma_1-s_{j}+1)-\tau)\beta_j^2 \nonumber \\
&=t(\gamma_j-\beta_j)((\lambda-2-s_j)\gamma_j-(\sigma_1-s_{j}-\lambda)\beta_j)+t. \label{eq:tr=balabala}
\end{align}

If {\rm(\ref{eq:A})} holds, then
\begin{align}
{\rm tr}(A_1)={\rm tr}(A_2)&={\rm tr}(A_3), \label{eq:equiv1} \\
{\rm tr}(A_1X_j^{-1})={\rm tr}(A_2X_j^{-1})&={\rm tr}(A_3X_j^{-1}),  \qquad j=1,2,3;  \label{eq:equiv2}
\end{align}
due to $t\ne 0$ and $\gamma_j\ne \beta_j$, from (\ref{eq:tr=t}) and (\ref{eq:tr=balabala}) we see that (\ref{eq:equiv2}) is equivalent to (\ref{eq:equiv-main}).

Now suppose (\ref{eq:equiv-main}) holds (so that (\ref{eq:equiv2}) is satisfied).
Then for $j=1,2,3$,
\begin{align}
(\sigma_1+2-2\lambda)\gamma_j&=(\sigma_1-s_j-\lambda)(\gamma_j-\beta_j), \label{eq:deduce1} \\
(\sigma_1+2-2\lambda)\beta_j&=(\lambda-2-s_j)(\gamma_j-\beta_j), \label{eq:deduce2}
\end{align}
hence
\begin{align}
&(\sigma_1+2-2\lambda)^2=(\sigma_1+2-2\lambda)^2(\gamma_j^2-s_j\gamma_j\beta_j+\beta_j^2) \nonumber \\
=\ &((\sigma_1-s_j-\lambda)^2-s_j(\sigma_1-s_j-\lambda)(\lambda-2-s_j)+(\lambda-2-s_j)^2)(\gamma_j-\beta_j)^2 \nonumber \\
=\ &(t^{-2}\kappa(s_j+2)-\delta)(\gamma_j-\beta_j)^2, \label{eq:deduce3}
\end{align}
where in the last line we use $s_j^3-\sigma_1s_j^2+\sigma_2s_j-\sigma_3=0$.
By (\ref{eq:t-sigma-mu}),
\begin{align}
(\sigma_1+2-2\lambda)^2=t^{-2}\kappa(s_j+2-t^2)(\gamma_j-\beta_j)^2.  \label{eq:deduce4}
\end{align}
If $\sigma_1+2-2\lambda=0$, then (\ref{eq:deduce1}) and (\ref{eq:deduce2}) imply $s_1=s_2=s_3=2$, contradicting Lemma \ref{lem:sj-ne2}.
Hence $\sigma_1+2-2\lambda\ne 0$, so that $\kappa\ne 0$ and
\begin{align}
s_j\ne t^2-2, \qquad j=1,2,3. \label{eq:sj-ne}
\end{align}
Consequently,
\begin{align*}
{\rm tr}(A_j)-2=-(s_j+2-t^2)(\gamma_j-\beta_j)^2=-t^2\kappa^{-1}(\sigma_1+2-2\lambda)^2,
\end{align*}
which is independent of $j$. Thus (\ref{eq:equiv1}) is established.

The proof is completed once $I, X_1^{-1}, X_2^{-1}, X_3^{-1}$ are shown to form a basis of $\mathcal{M}(2,\mathbb{C})$; then (\ref{eq:equiv1}), (\ref{eq:equiv2}) will imply that ${\rm tr}(A_1Z)={\rm tr}(A_2Z)={\rm tr}(A_3Z)$ for all $Z\in\mathcal{M}(2,\mathbb{C})$, forcing $A_1=A_2=A_3$.

Assume on the contrary that $I, X_1^{-1}, X_2^{-1}, X_3^{-1}$ are linearly dependent, so are $I, X_1, X_2, X_3$.
Without loss of generality, assume $X_3=aI+bX_1+cX_2$.
Then
\begin{align}
t&={\rm tr}(X_3)=2a+t(b+c), \label{eq:tr-1} \\
s_1&={\rm tr}(X_3X_2^{-1})=ta+s_3b+2c, \\
s_2&={\rm tr}(X_3X_1^{-1})=ta+2b+s_3c, \\
2&={\rm tr}(X_3X_3^{-1})=ta+s_2b+s_1c. \label{eq:tr-4}
\end{align}
The first equation implies
\begin{align}
\overline{X}_3=b\overline{X}_1+c\overline{X}_2, \qquad \text{with} \qquad \overline{X}_j=X_j-\frac{t}{2}I.  \label{eq:X-bar}
\end{align}
The following can be computed using (\ref{eq:basic-2}):
\begin{align}
\overline{X}_1\overline{X}_2+\overline{X}_2\overline{X}_1&=pI,  \qquad \text{with} \qquad p=\frac{1}{2}t^2-s_3, \\
\overline{X}^2_j&=dI, \qquad \text{with}  \qquad d=\frac{1}{4}t^2-1.
\end{align}
Then (\ref{eq:Aj-2}) can be re-written as
\begin{align*}
A_1&=(\gamma_1^2-\beta_1^2)\overline{X}_{2}\overline{X}_{3}+\frac{t}{2}(\gamma_1-\beta_1)^2(\overline{X}_{2}+\overline{X}_{3})+(\cdots)I \\
&=(\cdots)\overline{X}_1\overline{X}_2+\frac{t}{2}(\gamma_1-\beta_1)^2(b\overline{X}_1+(c+1)\overline{X}_2)+(\cdots)I, \\
A_2&=(\gamma_2^2-\beta_2^2)\overline{X}_{3}\overline{X}_{1}+\frac{t}{2}(\gamma_2-\beta_2)^2(\overline{X}_{3}+\overline{X}_{1})+(\cdots)I \\
&=(\cdots)\overline{X}_1\overline{X}_2+\frac{t}{2}(\gamma_2-\beta_2)^2((b+1)\overline{X}_1+c\overline{X}_2)+(\cdots)I, \\
A_3&=(\cdots)\overline{X}_{1}\overline{X}_{2}+\frac{t}{2}(\gamma_3-\beta_3)^2(\overline{X}_{1}+\overline{X}_{2})+(\cdots)I,
\end{align*}
where the $(\cdots)$'s stand for coefficients that are irrelevant.

By (\ref{eq:equiv1}), (\ref{eq:equiv2}), ${\rm tr}(A_1\overline{X}_\ell)={\rm tr}(A_2\overline{X}_\ell)={\rm tr}(A_3\overline{X}_\ell)$,
$\ell=1,2$. Consequently,
$${\rm tr}(\overline{X}_1\overline{X}_2)=p, \qquad {\rm tr}(\overline{X}_1^2)={\rm tr}(\overline{X}_2^2)=2d, \qquad
{\rm tr}(\overline{X}_1^2\overline{X}_2)={\rm tr}(\overline{X}_1\overline{X}_2^2)=0.$$
Comparing ${\rm tr}(A_j\overline{X}_1)$ and ${\rm tr}(A_j\overline{X}_2)$ for $j=1,2,3$, we respectively obtain
\begin{align*}
(\gamma_1-\beta_1)^2(2db+t_3(c+1))=(\gamma_2-\beta_2)^2(2d(b+1)+pc)&=(\gamma_3-\beta_3)^2(2d+p), \\
(\gamma_1-\beta_1)^2(pb+2d(c+1))=(\gamma_2-\beta_2)^2(p(b+1)+2dc)&=(\gamma_3-\beta_3)^2(2d+p),
\end{align*}
which lead to
\begin{align*}
2db+p(c+1)&=pb+2d(c+1), \\
2d(b+1)+pc&=p(b+1)+2dc.
\end{align*}
These force $p=2d$, so that $s_3=2$. With (\ref{eq:sj-ne}), one can deduce from (\ref{eq:tr-1})--(\ref{eq:tr-4}) that $s_1=s_2=2$, contradicting Lemma \ref{lem:sj-ne2}.
\end{proof}

\begin{rmk} \label{rmk:implication}
\rm It is worth highlighting some points contained in the proof. With (\ref{eq:equiv-main}) satisfied, we have the following implications:
\begin{enumerate}
  \item[\rm(i)] If $\sigma_1+2-2\lambda\ne 0$ and (\ref{eq:t-sigma-mu}) holds, then $\kappa\ne 0$, $\delta\ne 0$ and $\gamma_j\ne\beta_j$, $j=1,2,3$, as seen from (\ref{eq:deduce4}); also, $s_j\ne 2$ for some $j$, as seen from (\ref{eq:deduce1}) and (\ref{eq:deduce2}).
  \item[\rm(ii)] (\ref{eq:t-sigma-mu}) implies (\ref{eq:equiv1}) which reads
        \begin{align}
        (s_j+2-t^2)(\gamma_j-\beta_j)^2=(s_1+2-t^2)(\gamma_{1}-\beta_{1})^2, \qquad j=2,3; \label{eq:tr=}
        \end{align}
        conversely, if $\sigma_1+2-2\lambda\ne 0$ and $s_{\ell}\ne s_{\ell'}$ for some $\ell,\ell'$, then (\ref{eq:t-sigma-mu}) can follow from (\ref{eq:tr=}), since by (\ref{eq:deduce3}),
        $$\frac{\kappa(s_\ell+2)-t^2\delta}{\kappa(s_{\ell'}+2)-t^2\delta}=
        \frac{(\gamma_{\ell'}-\beta_{\ell'})^2}{(\gamma_{\ell}-\beta_{\ell})^2}=\frac{s_\ell+2-t^2}{s_{\ell'}+2-t^2}.$$
\end{enumerate}
\end{rmk}

\subsection{The character variety}

The results of Proposition \ref{prop:0}, Lemma \ref{lem:A=I}, Proposition \ref{prop:2} and Proposition \ref{prop:3} are summarized as:
\begin{thm}
The irreducible character variety of $K$ can be embedded in $\{(t,s_1,s_2,s_3,\tau)\in\mathbb{C}^5\}$ and is the disjoint union of four parts:
$$\mathcal{X}^{\rm irr}(K)=\mathcal{X}_0\sqcup\mathcal{X}_1\sqcup\mathcal{X}_2\sqcup\mathcal{X}_3,$$
where
\begin{itemize}
  \item $\mathcal{X}_0=\mathcal{X}_{0,1}\sqcup\mathcal{X}_{0,2}$, where $\mathcal{X}_{0,1}$ consists of $(0,s_1,s_2,s_3,\tau)$ with
        $$\tau^2=\delta\ne 0,\qquad \gamma_j=-\beta_j, \qquad j=1,2,3,$$
        and $\mathcal{X}_{0,2}$ consists of $(0,2\cos\theta_1,2\cos\theta_2,2\cos\theta_3,0)$ with
        $$\cos(2k_1+1)\theta_1=\cos(2k_2+1)\theta_2=\cos(2k_3+1)\theta_3\ne -1;$$
  \item $\mathcal{X}_1=\mathcal{X}_{1,1}\sqcup\mathcal{X}_{1,2}\sqcup\mathcal{X}_{1,3}$, where $\mathcal{X}_{1,\ell}$ consists of
        $(t,s_1,s_2,s_3,\tau)$ with
        $$\gamma_{\ell\pm}=\beta_{\ell\pm}, \qquad  t^2=s_{\ell}+2=s_{\ell+}+s_{\ell-}\in(-\infty,0)\cup(0,4);$$
  \item $\mathcal{X}_2$ consists of $(t,s_1,s_2,s_3,\tau)$ with
        \begin{align}
        s_j\in\left\{2\cos\left(\frac{(2h+1)\pi}{2k_j+1}\right)\colon h=0,\ldots,k_j\right\}, \\
        \tau^2-t(\sigma_1+2)\tau+t^2(\sigma_2+4)=\delta,  \label{eq:chi1}
        \end{align}
        so $\mathcal{X}_2$ has $(k_1+1)(k_2+1)(k_3+1)$ components, each being a conic;
  \item $\mathcal{X}_3$ consists of $(t,s_1,s_2,s_3,t\lambda)$ with
        \begin{align}
        t\ne 0, \qquad \sigma_1+2-2\lambda\ne 0, \nonumber \\
        t^2(\lambda^2-(\sigma_1+2)\lambda+(\sigma_2+4))=\delta,  \label{eq:last} \\
        (\lambda-2-s_j)\gamma_j=(\sigma_1-s_{j}-\lambda)\beta_j, \qquad j=1,2,3. \label{eq:equiv-main2}
        \end{align}
\end{itemize}
The dimensions are: $\dim\mathcal{X}_{0}=\dim\mathcal{X}_{1}=0$, $\dim\mathcal{X}_2=\dim\mathcal{X}_3=1$.
\end{thm}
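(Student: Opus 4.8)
The statement collects Proposition~\ref{prop:0}, Lemma~\ref{lem:A=I}, Proposition~\ref{prop:2} and Proposition~\ref{prop:3}, so the plan is mostly organizational. First I would set up the embedding. By Lemma~\ref{lem:3 matrix}(ii), an irreducible representation of the free group on $x_1,x_2,x_3$ is determined up to conjugacy by the trace data $(t,t_{12},t_{23},t_{13},t_{123})$ subject to (\ref{eq:r-gamma}); a representation of $\pi_1(S^3-K)$ is exactly such a free-group representation for which (\ref{eq:A}) holds, and the substitution $t_{ij}=t^2-s_\bullet$, $t_{123}=t^3+t-\tau$ is invertible once $t$ is fixed. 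Hence $\chi(\rho)\mapsto(t,s_1,s_2,s_3,\tau)$ is a well-defined injection of $\mathcal{X}^{\rm irr}(K)$ into $\mathbb{C}^5$, with image inside the hypersurface (\ref{eq:t-sigma-mu}) (which is just (\ref{eq:r-gamma}) rewritten).

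Next I would run an exhaustive case division governed by the conjugation-invariant quantities $\gamma_j-\beta_j$ --- equivalently, by which of the $A_j$ equal $I$, using Lemma~\ref{lem:A=I}. If $\gamma_j=\beta_j$ for all $j$, then $A_1=A_2=A_3=I$ automatically, so (\ref{eq:A}) holds; translating $\gamma_j=\beta_j$ into the corresponding finite set of values of $s_j$ and adjoining (\ref{eq:t-sigma-mu}) --- which for fixed $(s_1,s_2,s_3)$ is a conic in the two remaining coordinates $(t,\tau)$ --- gives $\mathcal{X}_2$ as a finite union of conics. If some index $\ell$ has $\gamma_\ell\ne\beta_\ell$ while some other index $\ell'$ has $\gamma_{\ell'}=\beta_{\ell'}$, then Proposition~\ref{prop:2} identifies the solutions of (\ref{eq:A}) with (\ref{eq:X2}), i.e.\ with $\mathcal{X}_{1,\ell}$. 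If $\gamma_j\ne\beta_j$ for all $j$, split on $t$: when $t=0$, Proposition~\ref{prop:0} --- its first alternative now excluded --- yields either $\gamma_j=-\beta_j$ for all $j$, which with (\ref{eq:t-sigma-mu}) is $\mathcal{X}_{0,1}$, or the cosine conditions together with $\delta=0$, which is $\mathcal{X}_{0,2}$; when $t\ne0$, Proposition~\ref{prop:3} identifies (\ref{eq:A}) with (\ref{eq:equiv-main}), and the argument in its proof shows $\sigma_1+2-2\lambda\ne0$ is forced (otherwise all $s_j=2$, against Lemma~\ref{lem:sj-ne2}), so this is exactly $\mathcal{X}_3$. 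In every case the reverse inclusion --- that each tuple in the listed piece genuinely arises from an irreducible representation satisfying (\ref{eq:A}) --- follows by producing $X_1,X_2,X_3$ via Lemma~\ref{lem:3 matrix}(i), making them irreducible via Lemma~\ref{lem:3 matrix}(ii) and Remark~\ref{rmk:irreducible}, and invoking the ``if'' halves of Lemma~\ref{lem:A=I}, Proposition~\ref{prop:2}, Proposition~\ref{prop:3} and Proposition~\ref{prop:0}; for $\mathcal{X}_3$ one also invokes Remark~\ref{rmk:implication}(i).

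I would then verify pairwise disjointness with the same invariants: vanishing of $t$ separates $\mathcal{X}_0$ from $\mathcal{X}_1$, from $\mathcal{X}_3$, and from the locus $t\ne0$ of $\mathcal{X}_2$; the number of indices with $\gamma_j=\beta_j$ (three on $\mathcal{X}_2$, two on each $\mathcal{X}_{1,\ell}$, and zero on $\mathcal{X}_3$ by Remark~\ref{rmk:implication}(i)) separates $\mathcal{X}_1$, $\mathcal{X}_2$ and $\mathcal{X}_3$ and distinguishes the three subfamilies $\mathcal{X}_{1,\ell}$; and on the slice $t=0$ of $\mathcal{X}_2$ one has $\gamma_j=\beta_j$, which is incompatible both with $\gamma_j=-\beta_j$ (hence with $\mathcal{X}_{0,1}$) and with the exclusion ``$\ne-1$'' defining $\mathcal{X}_{0,2}$. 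For the dimensions: on $\mathcal{X}_0$ and on $\mathcal{X}_1$ each $s_j$ is confined to a finite set, whereupon $t$ and $\tau$ are pinned down up to finitely many choices by (\ref{eq:t-sigma-mu}) (together with $t^2=s_{\ell+}+s_{\ell-}$ for $\mathcal{X}_1$), so both are finite; $\mathcal{X}_2$ is visibly a finite union of conics; and on $\mathcal{X}_3$ one solves each equation of (\ref{eq:equiv-main2}) for $\lambda$ as a rational function of $(s_j,\sigma_1)$, so equating two of them confines $(s_1,s_2,s_3)$ to a curve over which $\lambda$ is determined and then $t$ is determined up to sign by (\ref{eq:last}), giving $\dim\mathcal{X}_3=1$.

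The main obstacle is not any individual case --- those are the content of the results cited --- but the bookkeeping that makes the four pieces exhaustive and disjoint. One has to treat with care the degenerate traces $s_j\in\{\pm2\}$, where the formula for $\omega_{k_j}$ changes branch and the equivalences between $\gamma_j$-conditions and explicit values of $s_j$ must be re-examined, and the borderline tuples such as those with $s_\ell=t^2-2$, where a component of $\mathcal{X}_2$ a priori threatens to meet $\mathcal{X}_{1,\ell}$, or the $t=0$, $\delta=0$ tuples, where $\mathcal{X}_{0,1}$ and $\mathcal{X}_{0,2}$ could overlap; in each such situation one checks with the invariants above that the character lands in exactly one piece. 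The one point of genuine geometry, as opposed to formal manipulation, is the upper bound $\dim\mathcal{X}_3\le1$: it must be confirmed that equating the rational expressions for $\lambda$ really cuts out a proper, one-dimensional locus in $(s_1,s_2,s_3)$-space and does not degenerate, so that $\mathcal{X}_3$ is a curve rather than a surface.
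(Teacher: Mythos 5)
Your proposal is correct and follows essentially the same route as the paper: the theorem is stated there as a direct summary of Proposition~\ref{prop:0}, Lemma~\ref{lem:A=I}, Proposition~\ref{prop:2} and Proposition~\ref{prop:3}, organized by exactly the case division you describe (which $\gamma_j$ equal $\beta_j$, and $t=0$ versus $t\ne0$), with the embedding coming from Lemma~\ref{lem:3 matrix}. Your added bookkeeping on exhaustiveness, disjointness and dimensions is what the paper leaves implicit, and your caution about the branch change of $\omega_k$ at $s_j=\pm2$ is well placed.
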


Some supplements are in order:
\begin{itemize}
  \item $\mathcal{X}_0\subset\partial\mathcal{X}_3$; actually, we could have defined $\mathcal{X}_0\cup\mathcal{X}_3$ by replacing (\ref{eq:last}) and (\ref{eq:equiv-main2}) respectively  with (\ref{eq:chi1}) and
      \begin{align*}
      (\tau-t(2+s_j))\gamma_j=(t(\sigma_1-s_{j})-\tau)\beta_j, \qquad j=1,2,3,
      \end{align*}
      and adding (\ref{eq:tr=}) which, by Remark \ref{rmk:implication} (ii), is redundant for $\mathcal{X}_3$.
  \item Each of the points in $\mathcal{X}_1$ is isolated.
  \item $\mathcal{X}_2\cap\partial\mathcal{X}_3$ consists of $(t,s_1,s_2,s_3,\tau)$ with
        $$\tau=t(1+\frac{\sigma_1}{2}), \qquad \gamma_j=\beta_j,\qquad j=1,2,3.$$
\end{itemize}

\medskip

We investigate $\mathcal{X}_3$ in more detail. Recall Remark \ref{rmk:implication} (i) that $\delta\ne 0$.

Let $\ell\in\{1,2,3\}$. By (\ref{eq:equiv-main2}) for $j=\ell\pm$,
\begin{align}
(\lambda-2-s_{\ell+})\gamma_{\ell+}\beta_{\ell-}=(s_{\ell-}+s_\ell-\lambda)\beta_{\ell+}\beta_{\ell-}, \label{eq:condition1} \\
(\lambda-2-s_{\ell-})\gamma_{\ell-}\beta_{\ell+}=(s_{\ell+}+s_\ell-\lambda)\beta_{\ell+}\beta_{\ell-}. \label{eq:condition2}
\end{align}

\begin{ass}
On $\mathcal{X}_3$, $\gamma_{\ell-}\beta_{\ell+}=\gamma_{\ell+}\beta_{\ell-}$ if and only if
$s_{\ell+}=s_{\ell-}$ and $v_{\ell+}^{2(k_{\ell+}-k_{\ell-})}=1$.
\end{ass}
\begin{proof}
The ``if" part is trivial. Suppose $\gamma_{\ell-}\beta_{\ell+}=\gamma_{\ell+}\beta_{\ell-}$.
Then $(s_{\ell-}-s_{\ell+})(\gamma_{\ell+}-\beta_{\ell+})\beta_{\ell-}=0$. If $\beta_{\ell-}\ne 0$, then since $\gamma_{\ell+}\ne\beta_{\ell+}$, we have $s_{\ell+}=s_{\ell-}$. If $\beta_{\ell-}=0$, then $\gamma_{\ell-}\beta_{\ell+}=0$, which, due to $\gamma_{\ell-}\ne\beta_{\ell-}$, implies $\beta_{\ell+}=0$; by (\ref{eq:equiv-main2}) for $j=\ell\pm$, $\lambda=2+s_{\ell+}=2+s_{\ell-}$, hence also $s_{\ell+}=s_{\ell-}$. Thus $v_{\ell+}=v_{\ell-}$ or $v_{\ell+}=v_{\ell-}^{-1}$. Then $\gamma_{\ell-}\beta_{\ell+}=\gamma_{\ell+}\beta_{\ell-}$ becomes $v_{\ell+}^{2(k_{\ell+}-k_{\ell-})}=1$.
\end{proof}

Decompose $\mathcal{X}_3$ as the ``singular" part and the ``regular" part: let
\begin{align}
\mathcal{X}_3^{\rm sin}&=\{(t,s_1,s_2,s_3,t\lambda)\in\mathcal{X}_3\colon\gamma_{\ell-}\beta_{\ell+}=\gamma_{\ell+}\beta_{\ell-}\ \text{for\ all\ }\ell\} \\
&=\{(t,v+v^{-1},v+v^{-1},v+v^{-1},t\lambda)\in\mathcal{X}_3\colon v^{2k_1}=v^{2k_2}=v^{2k_3}\}, \\
\mathcal{X}_3^{\rm reg}&=\mathcal{X}_3-\mathcal{X}_3^{\rm ex}.
\end{align}
Obviously, $\mathcal{X}_3^{\rm sin}$ consists of finitely many points, unless $k_1=k_2=k_3$, in which case $\mathcal{X}_3^{\rm sin}$ is a curve with two components, each being parameterized by $v+v^{-1}$.

Let $(t,s_1,s_2,s_3,t\lambda)\in\mathcal{X}_3^{\rm reg}$, then $\gamma_{\ell-}\beta_{\ell+}\ne \gamma_{\ell+}\beta_{\ell-}$ for some $\ell$.
From (\ref{eq:condition1}), (\ref{eq:condition2}) we obtain
\begin{align}
\lambda&=\frac{s_{\ell-}(\gamma_{\ell-}\beta_{\ell+}-\beta_{\ell+}\beta_{\ell-})-
s_{\ell+}(\gamma_{\ell+}\beta_{\ell-}-\beta_{\ell+}\beta_{\ell-})}{\gamma_{\ell-}\beta_{\ell+}-\gamma_{\ell+}\beta_{\ell-}}+2,   \label{eq:lambda} \\
s_\ell&=\frac{(s_{\ell-}-s_{\ell+})(\gamma_{\ell+}\gamma_{\ell-}-\beta_{\ell+}\beta_{\ell-})}
{\gamma_{\ell-}\beta_{\ell+}-\gamma_{\ell+}\beta_{\ell-}}+2.  \label{eq:s3}
\end{align}
With $\lambda$ and $s_\ell$ substituted, (\ref{eq:equiv-main2}) for $j=\ell$ can be re-written as one polynomial equation in $s_{\ell+},s_{\ell-}$, displaying $\mathcal{X}_3^{\rm reg}$ as an affine algebraic curve.
We may regard $t^2$ as a rational function on $\mathcal{X}_3^{\rm reg}$ through (\ref{eq:last}); moreover, when $s_{\ell+}\ne s_{\ell-}$, by Remark \ref{rmk:implication} (ii), the condition (\ref{eq:last}) is equivalent to
$$t^2=\frac{(s_{\ell+}+2)(\gamma_{\ell+}-\beta_{\ell+})^2-(s_{\ell-}+2)(\gamma_{\ell-}-\beta_{\ell-})^2}
{(\gamma_{\ell+}-\beta_{\ell+})^2-(\gamma_{\ell-}-\beta_{\ell-})^2}.$$

\section{The A-polynomial}

Choose a meridian-longitude pair $(\mathfrak{m},\mathfrak{l})$ of $K$, and denote the corresponding elements of $\pi_{1}(E_{K})$
by the same notations. Following \cite{LR03} (see Page 303), let $\mathcal{R}_{U}(K)$ denote the subset of $\mathcal{R}(K)$ consisting of representations which send $\mathfrak{m}$ and $\mathfrak{l}$ to upper-triangular matrices.
Define
$$\xi=(\xi_{1},\xi_{2}):\mathcal{R}_{U}(K)\to\mathbb{C}^{2}$$ by
setting $\xi_{1}(\rho)$ (resp. $\xi_{2}(\rho)$) to be the upper-left entry of $\rho(\mathfrak{m})$ (resp. $\rho(\mathfrak{l})$).
Taking the Zariski closure of the image of $\xi$, we obtain an affine algebraic curve $\mathcal{V}$, which is known to have the property that each component has dimension zero or one. The {\it A-polynomial} $A_K$ is defined to be the defining polynomial of the one-dimensional part of $\mathcal{V}$.

A-polynomial is notoriously difficult to compute. Till now, in the literary, formulas for A-polynomials have been obtained for few families of knots; see \cite{GM11,HL16,Ma14,Pe15} and the references therein.

Here we present a method for deriving a formula for the A-polynomial of an odd classical pretzel knot, leaving practical computations to possible future work.

For the pretzel knot $K$, take $\mathfrak{m}=x_1$. Let $\rho$ be a representation of $\pi_1(E_K)$ as in Section 3, but this time we require $\rho\in\mathcal{R}_U(K)$.

For $j=1,2,3$, let
\begin{align}
B_{j}&=Y_{j+}^{-k_{j+}}Y_{j-}^{k_{j-}+1} \\
&=-\beta_{j+}\gamma_{j-}Y_j^{-1}+\beta_{j+}\beta_{j-}Y_{j+}+\gamma_{j+}\gamma_{j-}Y_{j-}-\gamma_{j+}\beta_{j-}I. \label{eq:Bj}
\end{align}
Observe that
\begin{align}
B_{1}B_{2}B_{3}=I.   \label{eq:B}
\end{align}

Let $L_j$ denote the image under $\rho$ of the longitude associated to $x_j$ (so $L_1=\rho(\mathfrak{l})$).
Using (\ref{eq:zn}), we find
\begin{align}
L_j&=Y_{j-}^{-k_{j-}}Y_{j}^{k_{j}+1}Y_{j+}^{-k_{j+}}Y_{j-}^{k_{j-}+1}Y_{j}^{-k_{j}}Y_{j+}^{k_{j+}+1}=B_{j+}B_{j}B_{j-}. \label{eq:Lj}
\end{align}

The 1-dimensional part of $\mathcal{X}^{\rm irr}(K)$ consists of $\mathcal{X}_3$ and $\mathcal{X}_2$, the latter having $n=(k_1+1)(k_2+1)(k_3+1)$ components, so $A_K$ can be factorized as
\begin{align}
A_K=\overline{A}_K\cdot\prod\limits_{h=1}^nA_K^{(h)}.
\end{align}
Note that $s_1,s_2,s_3$ are all known constants on each component of $\mathcal{X}_2$, thus the corresponding factor $A_K^{(h)}$ is easy to understand, as explained in Remark \ref{rmk:trivial-part}.

The main achievement of this section is concerned with the ``hard" factor $\overline{A}_K$ contributed by $\mathcal{X}_3$.

Suppose the upper-left entries of $X_1,L_1$ are $u,w$, respectively, so that ${\rm tr}(L_j)=w+w^{-1}$, $j=1,2,3$.
\begin{prop}
If $\chi(\rho)\in\mathcal{X}_3$, then
\begin{align}
(1+w)(u+u^{-1})(\sigma_1+2-2\lambda)=(1-w)(u-u^{-1})(\sigma_1+2-2t^2).  \label{eq:u-w}
\end{align}
\end{prop}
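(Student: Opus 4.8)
The plan is to work with the element $L_1 = \rho(\mathfrak{l})$, which by (\ref{eq:Lj}) equals $B_2 B_1 B_3$, and to extract the relation between its upper-left entry $w$ and the upper-left entry $u$ of $X_1 = \rho(\mathfrak{m})$. Since $\rho \in \mathcal{R}_U(K)$, both $X_1$ and $L_1$ are upper-triangular, and because $\mathfrak{m},\mathfrak{l}$ commute, $L_1$ is simultaneously diagonalizable with $X_1$ (generically). So I would first normalize: conjugate so that $X_1 = U(u, b)$ for a suitable $b$; in fact the cleanest route is to use that $X_1$ and $L_1$ share an eigenvector, and compare traces rather than entries. The key observation is the trace-comparison identity from Proposition \ref{prop:3}: on $\mathcal{X}_3$ we have $A_1 = A_2 = A_3$, and from the proof of that proposition, ${\rm tr}(A_j) - 2 = -t^2\kappa^{-1}(\sigma_1 + 2 - 2\lambda)^2$ together with the formula ${\rm tr}(A_j) - 2 = -(s_j + 2 - t^2)(\gamma_j - \beta_j)^2$. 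I expect the $(\sigma_1 + 2 - 2\lambda)$ versus $(\sigma_1 + 2 - 2t^2)$ dichotomy in (\ref{eq:u-w}) to come precisely from comparing these two expressions for ${\rm tr}(A_j)$.

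Concretely, I would proceed as follows. First, relate $L_1$ to the $A_j$: from $L_j = B_{j+}B_j B_{j-}$ and the definitions $B_j = Y_{j+}^{-k_{j+}}Y_{j-}^{k_{j-}+1}$, together with $A_j = Y_j^{k_j}X_{j+}Y_j^{-k_j}X_{j-}$, there should be an algebraic identity expressing $L_1$ (or its relevant matrix entries) in terms of the common matrix $A := A_1 = A_2 = A_3$ and the $X_j$'s. Second, having $X_1$ upper-triangular with eigenvalue $u$ on the shared eigenvector $e_1$, apply everything to $e_1$: $L_1 e_1 = w e_1$. This turns the matrix identity into a scalar identity involving $u$, $w$, and the traces/invariants $s_j, \sigma_1, \lambda, t$. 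Third, substitute the $\mathcal{X}_3$ relations (\ref{eq:equiv-main2}), i.e. $(\lambda - 2 - s_j)\gamma_j = (\sigma_1 - s_j - \lambda)\beta_j$, to eliminate the $\gamma_j, \beta_j$ in favor of $\lambda, s_j, \sigma_1$, and simplify to reach (\ref{eq:u-w}). The appearance of $(1 \pm w)$ and $(u \pm u^{-1})$ strongly suggests writing $L_1 = w e_{11} + \ast\, e_{12} + w^{-1} e_{22}$ and $X_1 = u e_{11} + \ast\, e_{12} + u^{-1} e_{22}$ and looking at the combination ${\rm tr}(L_1)\,{\rm tr}(X_1^{-1}) - {\rm tr}(L_1 X_1^{-1})$-type quantities, or equivalently computing ${\rm tr}(L_1 X_1)$ and ${\rm tr}(L_1 X_1^{-1})$ and forming their sum and difference; one of these combinations kills the off-diagonal term and produces $(1+w)(u+u^{-1})$, the other produces $(1-w)(u-u^{-1})$.

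The main obstacle I anticipate is the bookkeeping needed to express $L_1 = B_2 B_1 B_3$ cleanly in terms of $A$ and low-degree monomials in $X_1, X_2, X_3$. Each $B_j$ is, by (\ref{eq:Bj}), a linear combination of $Y_j^{-1}, Y_{j+}, Y_{j-}, I$, so naively $B_2 B_1 B_3$ is a sum of $64$ terms in products of the $Y$'s; the $Y$'s are $X_{j+}X_{j-}^{-1}$, so this is really a mess of words in $X_1, X_2, X_3$. The right move is probably not to expand $B_2 B_1 B_3$ directly but to use the relation (\ref{eq:B}), $B_1 B_2 B_3 = I$, together with (\ref{eq:A}) to rewrite $L_1$; note $L_j = B_{j+}B_j B_{j-} = B_{j+}(B_{j-}^{-1}B_{j+}^{-1}) B_{j-}$ using $B_1 B_2 B_3 = I$, i.e. $L_j$ is a commutator-like conjugate, which should collapse dramatically. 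Alternatively, one can use the classical fact that the longitude $\mathfrak{l}$, being in the second commutator-ish part, satisfies $\rho(\mathfrak{l})$ commutes with $\rho(\mathfrak{m})$, so I only need the eigenvalue $w$, computable as a ratio of $2\times 2$ minors; pairing that with the single scalar constraint coming from $A_1 = A_2$ should suffice. The second delicate point is handling the case $t = 0$ or $\sigma_1 + 2 - 2\lambda = 0$ consistently, but on $\mathcal{X}_3$ we are given $t \ne 0$ and $\sigma_1 + 2 - 2\lambda \ne 0$, so those degeneracies are excluded and the division steps in forming $\lambda = \tau/t$ and in (\ref{eq:lambda})--(\ref{eq:s3}) are legitimate.
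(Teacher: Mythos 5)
Your plan assembles several of the right ingredients (that $L_1$ commutes with $X_1$ and hence, generically, lies in the span of $X_1$ and $I$ with coefficients determined by $u$ and $w$; that (\ref{eq:B}) should be exploited; that (\ref{eq:equiv-main2}) and (\ref{eq:last}) must be substituted at the end), but the core of the argument is missing and the specific mechanism you propose in its place does not hold up. The step that actually produces (\ref{eq:u-w}) is the following scalar identity: writing $q={\rm tr}(B_3)$, the relations (\ref{eq:Lj}) and (\ref{eq:B}) give $L_1B_3^{-1}=B_2B_1=B_3^{-1}L_2$, so ${\rm tr}(L_1B_3^{-1})={\rm tr}(B_3^{-1}L_2)={\rm tr}(B_1B_2)=q$; feeding in $L_j=\frac{w-w^{-1}}{u-u^{-1}}X_j+\frac{uw^{-1}-wu^{-1}}{u-u^{-1}}I$ for $j=1,2$ yields both the constraint ${\rm tr}(X_1B_3^{-1})={\rm tr}(X_2B_3^{-1})$ and a linear equation relating $q$, ${\rm tr}(X_2B_3^{-1})$, $u$ and $w$. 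One must then actually evaluate ${\rm tr}(X_1B_3^{-1})$ and ${\rm tr}(X_2B_3^{-1})$ from the four-term expansion (\ref{eq:Bj}) and reduce them, via (\ref{eq:equiv-main2}) and (\ref{eq:last}), to $\vartheta(\sigma_1+2-\lambda-t^2)\kappa t^{-1}$ and $q-\vartheta(\lambda-t^2)\kappa t^{-1}$ respectively (up to the common factor); equating them pins down $q=\vartheta(\sigma_1+2-2t^2)\kappa t^{-2}$, and only then does the linear equation in $q$ collapse to (\ref{eq:u-w}). None of this computation, nor the identity ${\rm tr}(X_1B_3^{-1})={\rm tr}(X_2B_3^{-1})$ that drives it, appears in your proposal.

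Two of your concrete suggestions would also lead you astray. First, the guess that the dichotomy $(\sigma_1+2-2\lambda)$ versus $(\sigma_1+2-2t^2)$ arises from comparing the two expressions for ${\rm tr}(A_j)-2$ is not how it works: that comparison is exactly the content of (\ref{eq:last}) restricted to $\mathcal{X}_3$ and carries no new information about $w$; the two factors instead emerge from $q$ and from ${\rm tr}(X_2B_3^{-1})$ after the algebra above. Second, the hoped-for identity expressing $L_1$ in terms of $A:=A_1=A_2=A_3$ and the $X_j$'s is not needed and is not established; the feared $64$-term expansion of $B_2B_1B_3$ is avoided not by your commutator rewriting but by never expanding the triple product at all --- only traces against a single $B_3$ are ever computed. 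As it stands the proposal is a heuristic outline rather than a proof.
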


\begin{proof}
It suffices to prove (\ref{eq:u-w}) when $\chi(\rho)$ belongs to an open subset of $\mathcal{X}_3$. Let us assume $u\notin\{\pm 1\}$ and $\lambda\ne 2+s_j, j=1,2$.

It is easy to see that
\begin{align}
L_j=\frac{w-w^{-1}}{u-u^{-1}}X_{j}+\frac{uw^{-1}-wu^{-1}}{u-u^{-1}}I.
\end{align}
Let $q={\rm tr}(B_3)$. By (\ref{eq:B}), ${\rm tr}(B_2B_1)={\rm tr}(B_3^{-1})=q$.
By (\ref{eq:Lj}), $B_2B_1=L_1B_3^{-1}=B_3^{-1}L_2$, hence
\begin{align}
q&={\rm tr}(L_1B_{3}^{-1})=\frac{w-w^{-1}}{u-u^{-1}}{\rm tr}(X_1B_{3}^{-1})+\frac{uw^{-1}-wu^{-1}}{u-u^{-1}}q,  \nonumber \\
q&={\rm tr}(L_2B_{3}^{-1})=\frac{w-w^{-1}}{u-u^{-1}}{\rm tr}(X_2B_{3}^{-1})+\frac{uw^{-1}-wu^{-1}}{u-u^{-1}}q.  \label{eq:t3-2}
\end{align}
As a consequence,
\begin{align}
{\rm tr}(X_1B_3^{-1})={\rm tr}(X_2B_3^{-1}). \label{eq:AP-key}
\end{align}

By (\ref{eq:Bj}),
\begin{align*}
B_3X_2^{-1}=-\beta_1\gamma_{2}X_2X_1^{-1}X_2^{-1}+\beta_1\beta_{2}X_2X_{3}^{-1}X_{2}^{-1}+\gamma_1\gamma_{2}X_3X_1^{-1}X_2^{-1}-
\gamma_1\beta_{2}X_{2}^{-1}, \\
\end{align*}
hence
\begin{align}
t^{-1}\cdot{\rm tr}(X_2B_{3}^{-1})&\ =-\beta_1\gamma_2+\beta_1\beta_2+\gamma_1\gamma_2(s_{1}+s_2+1-\lambda)-\gamma_1\beta_{2} \nonumber \\
&\ =(\sigma_1-\lambda-s_3)\gamma_1\gamma_2+(\gamma_1-\beta_1)(\gamma_2-\beta_2) \label{eq:longi1}  \\
&\stackrel{(\ref{eq:equiv-main2})}{=}\vartheta\left(\prod\limits_{j=1}^3(\sigma_1-\lambda-s_j)+(\sigma_1+2-2\lambda)^2\right) \nonumber \\
&\ =\vartheta((\sigma_1+2-\lambda)\kappa t^{-2}-\delta) \nonumber \\
&\stackrel{(\ref{eq:last})}{=}\vartheta(\sigma_1+2-\lambda-t^2)\kappa t^{-2},   \label{eq:tr-final}
\end{align}
where
\begin{align}
\vartheta=\frac{\beta_1\beta_2}{(\lambda-2-s_1)(\lambda-2-s_2)}.
\end{align}
By (\ref{eq:Bj}) again,
\begin{align*}
X_1B_{3}=-\beta_1\gamma_{2}X_1X_{2}X_1^{-1}+\beta_1\beta_{2}X_1X_{2}X_{3}^{-1}+\gamma_1\gamma_{2}X_1X_{3}X_1^{-1}-\gamma_1\beta_{2}X_1,
\end{align*}
hence
\begin{align}
t^{-1}\cdot{\rm tr}(X_1B_{3}^{-1})&\ =t^{-1}\cdot{\rm tr}(X_1(qI-B_{3}))=q-t^{-1}\cdot{\rm tr}(X_1B_{3}) \nonumber \\
&\ =q+\beta_{1}\gamma_2+(s_{3}+1-\lambda)\beta_1\beta_{2}-\gamma_1\gamma_{2}+\gamma_1\beta_{2} \nonumber \\
&\ =q+(s_{3}+2-\lambda)\beta_1\beta_{2}-(\gamma_1-\beta_1)(\gamma_2-\beta_{2}) \nonumber \\
&\stackrel{(\ref{eq:equiv-main2})}{=}q-\vartheta\left(\prod\limits_{j=1}^3(\lambda-2-s_j)+(\sigma_1+2-2\lambda)^2\right) \nonumber \\
&\ =q-\vartheta(\lambda\kappa t^{-2}-\delta)  \nonumber  \\
&\stackrel{(\ref{eq:last})}{=}q-\vartheta(\lambda-t^2)\kappa t^{-2}. 
\end{align}
Thus (\ref{eq:AP-key}) implies
\begin{align}
q=\vartheta(\sigma_1+2-2t^2)\kappa t^{-2}.  \label{eq:t3-3}
\end{align}
Combining (\ref{eq:t3-2}), (\ref{eq:tr-final}) and (\ref{eq:t3-3}), we obtain (\ref{eq:u-w}).
\end{proof}

Then $\overline{A}_K$ can be obtained by computing the multi-variable resultant of the following (remembering $t=u+u^{-1}$):
\begin{align}
(\mu-2-s_j)\gamma_j&=(\sigma_1-s_{j}-\mu)\beta_j, \qquad j=1,2,3, \label{eq:AP-1} \\
t^2(\lambda^2-(\sigma_1+2)\lambda+\sigma_2+4)&=4+\sigma_3+2\sigma_2-\sigma_1^2, \label{eq:AP-2}\\
(1+w)(u+u^{-1})(\sigma_1+2-2\lambda)&=(1-w)(u-u^{-1})(\sigma_1+2-2t^2). \label{eq:AP-3}
\end{align}

\begin{rmk} \label{rmk:trivial-part}
\rm If $\chi(\rho)$ lies in the $h$-th component of $\mathcal{X}_2$, then (\ref{eq:t3-2}), (\ref{eq:longi1}) and (\ref{eq:AP-2}) are still valid (whenever $t\notin\{0,\pm 2\}$). Eliminating $\lambda$ from these, we may obtain a polynomial in $u,w$, which is the very $A^{(h)}_K$.
\end{rmk}

\begin{exmp} 
\rm As an illustration, consider the case $k_1=k_2=1$. Then $\beta_1=\beta_2=1,\gamma_1=s_1,\gamma_2=s_2$.
From (\ref{eq:AP-1}) for $j=1,2$ we obtain
$$\lambda=s_1+s_2+1, \qquad s_3=s_1s_2+1,$$
then use them to re-write (\ref{eq:AP-2}), (\ref{eq:AP-3}) as, respectively,
\begin{align}
s_3&=(s_1+s_2)^2-(u^2+u^{-2}+2)(s_1+s_2)+2(u^2+u^{-2}+1), \label{eq:s3-quadratic} \\
s_1+s_2&=\frac{wu^2+1}{w+u^2}s_3-\frac{(w-1)(u^4-u^{-2})}{w+u^2}. \label{eq:s1+s2}
\end{align}
Use (\ref{eq:s1+s2}), we can re-write (\ref{eq:AP-1}) for $j=3$ as
\begin{align}
((w-1)(u^2-1)s_3+w(u^{-2}+1-u^4)+u^4+u^2-u^{-2})\gamma_3+(w+u^2)\beta_3=0.  \label{eq:s3-high}
\end{align}
Then $\overline{A}_K$ (for $K=P(3,3,2k_3+1)$) may be obtained from (\ref{eq:s3-quadratic})--(\ref{eq:s3-high}) by eliminating $s_1+s_2$ and $s_3$.
\end{exmp}

\end{document}